\definecolor{mygray}{RGB}{169,169,169}
\declaretheorem[name=Theorem,numberwithin=section]{thm}
\pgfplotsset{compat=1.15}
\newtheorem{all}[thm]{Theorem}
\newtheorem{lemma}[thm]{Lemma}
\newtheorem{kov}[thm]{Corollary}
\newtheorem{sejt}[thm]{Conjecture}
\newtheorem{kerd}[thm]{Question}
\theoremstyle{definition}
\newtheorem{pl}[thm]{Example}
\newtheorem{mj}[thm]{Remark}
\newtheorem{mydef}[thm]{Definition}
\newtheorem{obs}[thm]{Observation}
\title{Caching and Accumulation Games}
\author{\'Aron J\'anosik$^{1}$ \and Csenge Mikl\'os$^{1}$ \and D\'aniel G. Simon$^{1,2,3}$ \and Krist\'of Z\'olomy$^{1}$}
\date{}
\begin{document}

\maketitle
\begin{abstract}
        We investigate a discrete search game called the \emph{Multiple Caching Game} where the searcher's aim is to find all of a set of $d$ treasures hidden in $n$ locations. 
        Allowed queries are sets of locations of size $k$, and the searcher wins if in all $d$ queries, at least one treasure is hidden in one of the $k$ picked locations.
        Pálvölgyi showed that the value of the game is at most $\frac{k^d}{\binom{n+d-1}{d}}$, with equality for large enough $n$. We conjecture the exact cases of equality. We also investigate variants of the game and show an example where their values are different, answering a question of Pálvölgyi.

        This game is closely related to a continuous variant, \emph{Alpern's Caching Game}, based on which we define other continous variants of the multiple caching game and examine their values.
\end{abstract}

\footnotetext[1]{Institute of Mathematics, E\"otv\"os Lor\'and University, Hungary}
\footnotetext[2]{R\'enyi Institute, Hungary}
\footnotetext[3]{Research supported by ERC Advanced Grant ”GeoScape”}
\section{Introduction}

The Multiple Caching Game, introduced by Pálvölgyi \cite{Domotor} based on Csóka's method in \cite{Csoka}, is defined as follows:
The first player, called the hider (he), initially distributes $d$ treasures in $n$ locations (boxes), he is allowed to hide multiple treasures in one location. Then the second player, called the searcher (she) queries at most $k$ boxes. If at least one of them contains a treasure, the hider reveals and removes one of them, otherwise she loses. The searcher wins if she finds all $d$ treasures. For an example, see figure \ref{multiplegameabra}.

\begin{figure}[H]
\begin{center}
    
\begin{tikzpicture}[scale=0.33]
  \foreach \i in {0,1,2}{
  \draw (0+10*\i,0) rectangle (1+10*\i,1);
  \draw (1.5+10*\i,0) rectangle (2.5+10*\i,1);
  \draw (3+10*\i,0) rectangle (4+10*\i,1);
  \draw (4.5+10*\i,0) rectangle (5.5+10*\i,1);
    }
  \fill[blue] (0.25,0.5) circle (0.15);
  \fill[blue] (0.75,0.5) circle (0.15);

  \fill[blue] (3.5,0.5) circle (0.15);

  \draw[green, ultra thick, ->] (0.5,2) -- (0.5,1.2);
  \draw[green, ultra thick, ->] (3.5,2) -- (3.5,1.2);

  \fill[blue] (10.5,0.5) circle (0.15);

  \fill[blue] (13.5,0.5) circle (0.15);

  \draw[green, ultra thick, ->] (10.5,2) -- (10.5,1.2);
  \draw[green, ultra thick, ->] (15,2) -- (15,1.2);
  
  \fill[blue] (23.5,0.5) circle (0.15);
  
  \draw[green, ultra thick, ->] (20.5,2) -- (20.5,1.2);
  \draw[green, ultra thick, ->] (25,2) -- (25,1.2);

  \draw[->] (7,0.5) -- (9,0.5);
  \draw[->] (17,0.5) -- (19,0.5);
\end{tikzpicture}\caption{An example of the multiple caching game with $n=4$, $d=3$, $k=2$. The hider placed two treasures in box 1, and one in box 3. As her first move, the searcher queried boxes $1$ and $3$, and the hider decided to reveal and remove a treasure from box $1$. In the second move, she queried boxes $1$ and $4$, and he removed a treasure from box $1$. She then repeated this query, and lost the game, since there were no more treasures in either box $1$ or box $4$.}\label{multiplegameabra} 
\end{center}
\end{figure}

Alpern's Caching Game, introduced in a paper of Alpern, Fokkink,  Lidbetter and Clayton \cite{eredeti} is a continuous variant of this game. In this game, the locations are holes of depth $1$, and the hider places the treasures in some positive depth, such that the combined depths of the lowest treasure in each hole is at most $1$.
The searcher is allowed to dig in any hole continuously, revealing a treasure if she reached its depth. The searcher wins if she will have found all treasures after digging a total depth of $k$. For an example, see figure  \ref{Alperngameabra}.

\begin{figure}
  \centering
  \begin{tikzpicture}[scale=0.2]
    \draw[draw=black, fill=yellow] (0,0) rectangle (2,10);
    \draw[draw=white, fill=white] (0,9.99) rectangle (2,10);
    \fill[blue] (1,6.6666667) circle (0.3);

    \draw[draw=black, fill=yellow] (5,0) rectangle (7,10);
    \draw[draw=white, fill=white] (5,10) rectangle (7,10);

    \draw[draw=black, fill=yellow] (10,0) rectangle (12,10);
    \draw[draw=white, fill=white] (10,10) rectangle (12,10);
    \fill[blue] (6,5) circle (0.3);
    \fill[blue] (6,3.333333) circle (0.3);

    \draw[draw=black, fill=yellow] (20,0) rectangle (22,10);
    \draw[draw=white, fill=white] (20,10) rectangle (22,10);

    \draw[draw=black, fill=yellow] (25,0) rectangle (27,10);
    \draw[draw=white, fill=white] (25,10) rectangle (27,10);

    \draw[draw=black, fill=yellow] (30,0) rectangle (32,10);
    \draw[draw=white, fill=white] (30,10) rectangle (32,10);

    \fill[blue] (26,5) circle (0.3);
    \fill[blue] (26,3.333333) circle (0.3);

    \draw[draw=none, fill=white] (20,6.66667) rectangle (22,10);

    \draw[draw=none, fill=white] (25,6.666667) rectangle (27,10);

    \draw[draw=none, fill=white] (30,6.666667) rectangle (32,10);

    \draw[draw=black, fill=yellow] (40,0) rectangle (42,10);
    \draw[draw=white, fill=white] (40,10) rectangle (42,10);

    \draw[draw=black, fill=yellow] (45,0) rectangle (47,10);
    \draw[draw=white, fill=white] (45,10) rectangle (47,10);

    \draw[draw=black, fill=yellow] (50,0) rectangle (52,10);
    \draw[draw=white, fill=white] (50,10) rectangle (52,10);

    \fill[blue] (46,3.333333) circle (0.3);

    \draw[draw=none, fill=white] (40,6.66667) rectangle (42,10);

    \draw[draw=none, fill=white] (45, 5) rectangle (47,10);

    \draw[draw=none, fill=white] (50,5) rectangle (52,10);

        \draw[draw=black, fill=yellow] (60,0) rectangle (62,10);
    \draw[draw=white, fill=white] (60,10) rectangle (62,10);

    \draw[draw=black, fill=yellow] (65,0) rectangle (67,10);
    \draw[draw=white, fill=white] (65,10) rectangle (67,10);

    \draw[draw=black, fill=yellow] (70,0) rectangle (72,10);
    \draw[draw=white, fill=white] (70,10) rectangle (72,10);

    \draw[draw=none, fill=white] (60,5) rectangle (62,10);

    \draw[draw=none, fill=white] (65, 3.3333333) rectangle (67,10);

    \draw[draw=none, fill=white] (70,5) rectangle (72,10);

    \draw[->,thick] (14.5,5) -- (17.5,5);
    \draw[->,thick] (34.5,5) -- (37.5,5);
    \draw[->,thick] (54.5,5) -- (57.5,5);
  \end{tikzpicture}
  \caption{An example of Alpern's caching game with $n=3, d=3, k=2$. The hider placed one treasure in hole $1$ in depth $\frac{1}{3}$, and two in hole $2$ in depths $\frac{1}{2}$ and $\frac{2}{3}$. The searcher starts digging in all 3 holes simultaneously until she finds a treasure in hole $1$. Afterwards, she continues digging in holes 2 and 3 until she finds a treasure in hole $2$. Then she continues digging in holes $1$ and $2$ simultaneously and wins after digging a total of $\frac{1}{2}+ \frac{2}{3}+ \frac{1}{2} < 2$ depth.}
  \label{Alperngameabra}
\end{figure}

This problem first arose in a paper by Alpern, Fokkink, Lidbetter, and Clayton  \cite{eredeti}. The following papers were also published on the game: \cite{Alpern,Csoka,Csoka2,Domotor}.

The value of these games is the probability of the searcher winning if both parties play optimally, we denote this number by $v_M(n,d,k)$ and $v_A(n,d,k)$ respectively.

Based on Cs\'oka's \cite{Csoka} and P\'alvölgyi's \cite{Domotor} previous results, we already know that $v_M(n,d,k)\le v_A(n,d,k)\le\frac{k^d}{\binom{n+d-1}{d}}$. (For completeness, we include these results with their proofs, see Theorems \ref{altfelsobecs}.\ and \ref{multiplealpern}.\ ) P\'alvölgyi also showed that if $n$ is large enough compared to $k$ and $d$, and $k$ is an integer, then the values of both games are $\frac{k^d}{\binom{n+d-1}{d}}$.

\begin{pl}
    In the case $k=1$ of the Mulitple Caching game, the searcher cannot change his strategy based on the revealer's  responses, hence he has to guess the exact distribution of the treasures among the $n$ boxes. Therefore, for $k=1$ the value of the Multiple Caching Game is $\frac{1}{\binom{n+d-1}{d}}$. This argument is generalized in the proof of Theorem \ref{altfelsobecs}. 
\end{pl}

Most of our paper is concerned with the Multiple Caching Game. 
Csóka \cite{Csoka} showed that in Alpern's Caching Game, and optimal hiding strategy doesn't always place the treasures at maximal depths, this motivated the investigation of monotonicity statements in the discrete game. 
We show several such statements in Section $2$, in particular we prove the following lemma:

\begin{restatable}{lemma}{kevesebbrenemerimeg}  \label{kevesebbrenemerimeg}
In the Multiple Caching Game, for any $n,d,k$ integers such that $k\leq n$, there is an optimal searcher strategy that queries exactly $k$ boxes in each step.    
\end{restatable}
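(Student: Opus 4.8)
The plan is to pass to the minimax formulation and reduce the statement to a single-agent (fixed-opponent) optimization, where the added difficulty of an \emph{adaptive} hider disappears. Since the game is finite and zero-sum, $v_M(n,d,k)=\min_H\max_\sigma\Pr[\text{searcher wins}]$, and the same identity holds for the value $v_M^{=k}$ of the variant in which the searcher is forced to query exactly $k$ boxes. As the ``exactly $k$'' strategies form a subset of the ``at most $k$'' strategies, $v_M^{=k}\le v_M$ is immediate, so it suffices to prove that for every fixed hider strategy $H$ the searcher has a best response that always queries exactly $k$ boxes; taking $\min_H$ then yields $v_M^{=k}\ge v_M$ and hence equality, which is exactly the claim. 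First I would fix such an $H$ (a distribution over initial configurations together with a deterministic rule for which treasure to reveal) and study the resulting decision problem for the searcher.

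I would prove the fixed-$H$ statement by backward induction on the number $d$ of treasures still to be found, showing at every information set that some query of size exactly $k$ attains the optimal continuation value. The base case $d=1$ is immediate: with a single treasure left the searcher wins precisely when that treasure lies in her query, so enlarging any query to size $k$ (possible because $k\le n$) can only increase the winning probability. For the inductive step, compare querying a set $Q$ with $|Q|<k$ against querying $Q\cup\{b\}$ for some $b\notin Q$. Two of the three cases are favourable with no work: configurations on which $Q$ already misses and $Q\cup\{b\}$ also misses contribute an immediate loss either way, while configurations on which $Q$ misses but $b$ contains a treasure are a strict gain, since the searcher survives instead of losing at once.

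The only delicate case is when $Q$ already contains a treasure but the fixed hider reveals from the added box $b$; then the resulting position differs from the one reached after querying $Q$ alone, and --- crucially --- it need not be at least as good configuration-by-configuration, so a naive pointwise coupling fails. This is the \emph{main obstacle}. To handle it I would couple the two plays with a deferral: whenever the reveal lands in $Q$ the searcher proceeds exactly as the optimal $Q$-policy would, and whenever it lands on $b$ she re-issues the same underlying query (again padded to size $k$) without advancing that policy. Each deferral removes one treasure from $b$, so only finitely many occur before either $b$ is exhausted or the reveal falls in $Q$ and the simulation advances with one fewer treasure, where the induction hypothesis applies. To turn ``having removed extra treasures from $b$'' into an advantage rather than a liability, I would establish the auxiliary monotonicity that the searcher's value is non-increasing in the multiset of remaining treasures --- removing a treasure never hurts her --- and combine it with the reduced miss-probability of the larger query; I expect this auxiliary monotonicity (itself provable by a parallel induction) to be the real content of the argument.

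Finally I would iterate the single-box enlargement: starting from an arbitrary optimal searcher strategy and repeatedly replacing an under-sized query at some reachable information set by a query one box larger, the process terminates at a strategy all of whose queries have size exactly $k$, and by the above each replacement preserves optimality. Throughout, the hypothesis $k\le n$ is exactly what guarantees that a fresh box to add always exists.
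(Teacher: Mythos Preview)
Your proposal follows essentially the same route as the paper: pad an under-sized query, and if the hider reveals from the padding, repeat the padded query until a reveal lands in the original set; then invoke the fact that the extra treasures removed along the way can only help the searcher. The paper isolates that last fact as a separate lemma (Lemma~\ref{ajandek}, the ``gift lemma''): revealing and removing a treasure at any point does not decrease the value of the game. Its proof is a short two-sided simulation rather than an induction.

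The one place your write-up diverges is the detour through $\min_H\max_\sigma$ and the decision to fix a single hider strategy $H$ before arguing. This is where you should be careful: your ``auxiliary monotonicity'' is most naturally a statement about \emph{game values}, and that is exactly how the paper states and uses it. Once you freeze $H$, the hider's reveal rule is a function of the full history, and the histories after querying $Q$ versus $Q\cup\{b\}$ (possibly several times) diverge; so comparing the continuation values against that same fixed $H$ is no longer a clean ``same state with fewer treasures'' comparison, and the monotonicity becomes awkward even to formulate. The paper sidesteps this by never fixing $H$: it starts from an optimal searcher strategy, builds the padded-and-repeating modification, and compares the game value of the state reached after the repetition to the game value of the state the original query would have produced, using Lemma~\ref{ajandek}. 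Since the hider in the original game could have revealed the very treasure that ended the repetition, the comparison goes through with a single invocation of the gift lemma and no induction on $d$. Your argument is salvageable, but dropping the fixed-$H$ framing and arguing directly at the level of game values, as the paper does, is both shorter and avoids the definitional snag.

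Minor differences: the paper pads by all $k-\ell$ missing boxes at once rather than one at a time, and it iterates the modification step-by-step over the strategy rather than via backward induction on $d$.
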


In the beginning of Section $3$, we are mainly concerned with characterizing accurate triplets: 

\begin{mydef}
We call a triplet $(n,d,k)$ \emph{accurate}, if $v_M(n,d,k)=v_A(n,d,k)=\frac{k^d}{\binom{n+d-1}{d}}$.
\end{mydef}
Csóka conjectured the following:

\begin{sejt}[Cs\'oka \cite{Csoka}]\label{n=dk}
If $n\ge dk$ and $k$ is a positive real number, then $v_A(n,d,k)=\frac{k^d}{\binom{n+d-1}{d}}$.
\end{sejt}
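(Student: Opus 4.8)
The plan is to establish the matching lower bound $v_A(n,d,k)\ge \frac{k^d}{\binom{n+d-1}{d}}$ under the hypothesis $n\ge dk$; together with the upper bound of Theorem~\ref{altfelsobecs} this gives the claimed equality. By the definition of the value (and linearity of the payoff in the hider's mixed strategy, so that the worst hider response may be taken pure) it is enough to exhibit one randomized, adaptive searcher strategy whose winning probability is at least $\frac{k^d}{\binom{n+d-1}{d}}$ against every \emph{pure} hiding configuration. The first thing I would do is rewrite the target in the telescoping form
\[
\frac{k^d}{\binom{n+d-1}{d}}=\prod_{j=1}^{d}\frac{jk}{n+j-1},
\]
which makes an induction on $d$ natural and suggests the clean recursion $v_A(n,d,k)=\frac{dk}{n+d-1}\,v_A(n,d-1,k)$ that I would aim to realize strategically.

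The inductive step I would try to prove is $v_A(n,d,k)\ge \frac{dk}{n+d-1}\,v_A(n,d-1,k)$, by splitting the searcher's play into two phases. In the first phase she runs a symmetric continuous probing scheme (a uniformly random ordering of the holes together with randomized depth thresholds) whose sole purpose is to reach the \emph{shallowest} of the $d$ treasures; the factor $\frac{dk}{n+d-1}$ reflects that any of the $d$ treasures can be the one found first and that the relevant configuration space has size $n+d-1$ by stars-and-bars. Once the revealer uncovers this first treasure, the searcher has localized one occupied hole and spent a controlled amount of budget, and the remaining task—find the other $d-1$ treasures—should be set up to be a faithful copy of the game with parameters $(n,d-1,k)$, to which the inductive hypothesis applies. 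The key design requirement throughout is that the probing scheme be \emph{equalizing}: the probability of reaching the deepest treasure in each occupied hole within budget should depend only on which holes are occupied, not on the actual depths the hider chose. The hypothesis $n\ge dk$ enters precisely here, guaranteeing that at the top level (and hence, a fortiori, at every lower level, where the requirement reads $n\ge (d-1)k,\dots$) the searcher has enough unexplored holes and enough budget to carry out the next probing phase without truncation.

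The main obstacle is the base case together with the equalizing property, i.e.\ showing that a continuous strategy can attain the exact \emph{fractional} value $\frac{jk}{n+j-1}$ for real $k$ rather than a floored surrogate; this is exactly the point where the genuinely continuous nature of Alpern's game must be exploited, since a crude ``dig $\lfloor k\rfloor$ holes to the bottom'' strategy falls short when $k\notin\mathbb{Z}$. A second delicate point is the adaptivity: because the hider reveals treasures one at a time and is free to place them at non-maximal depth, I must rule out a skewed depth profile beating the bound, which is precisely what the equalizing property is meant to prevent and is likely the step demanding the most care (in particular, cleanly decoupling the found hole—which may still hide further treasures—from the residual $(n,d-1,k)$ instance). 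Finally, I would keep a second route in reserve: discretize each hole into fine depth layers, transfer the problem to the discrete Multiple Caching Game using $v_M\le v_A$ and Lemma~\ref{kevesebbrenemerimeg}, solve the resulting finite game inside the accurate regime forced by $n\ge dk$, and recover the continuous bound in the limit as the layer width tends to zero.
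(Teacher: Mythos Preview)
The statement you are attempting is a \emph{conjecture} (the environment \texttt{sejt} is ``Conjecture''); the paper does not prove it, and indeed lists it alongside the strictly stronger Conjecture~\ref{ujsejtes} as open. There is therefore no paper proof to compare your proposal against. The best result the paper records toward it is P\'alv\"olgyi's Theorem~\ref{Domotoraccurate}, which needs $n$ ``large enough'' in an unspecified sense, not the sharp threshold $n\ge dk$. (A small side remark: Theorem~\ref{altfelsobecs} as stated is for $v_M$; the extension to $v_A$ that you invoke is the remark immediately following it, citing Cs\'oka.)

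Your sketch locates the right telescoping identity and a natural inductive target, but neither route you outline is known to close, and you yourself flag the gaps. In the inductive route, the ``faithful copy of the $(n,d-1,k)$ game'' step is precisely the unresolved difficulty: after the first discovery the searcher has consumed a random amount of budget that depends on the hider's depth choices, the hole just opened may still contain further treasures, and the remaining configuration is no longer depth-normalized, so there is no argument given that the residual problem is dominated by a fresh $(n,d-1,k)$ instance. Your fallback discretization route is circular: it assumes the discrete game is accurate once $n\ge dk$, but that is exactly the content of the open Conjecture~\ref{ujsejtes} (even in the weaker form $n\ge dk$), proved in this paper only for $d\le 3$ and a few sporadic cases, and Theorem~\ref{Domotoraccurate} does not deliver the threshold $dk$. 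In short, what you have written is a reasonable research plan for attacking an open problem, not a proof.
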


Pálvölgyi conjectured a stronger claim: For $n\geq dk-1$, $(n,d,k)$ is accurate.

Based on our observations and examples, we claim that even the following should hold:

\begin{sejt}\label{ujsejtes}
If $n, d, k$ are integers, and $n\ge d(k-1)+1$, then $(n, d, k)$ is accurate.
\end{sejt}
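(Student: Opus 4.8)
The plan is to reduce the statement to a single lower bound on the discrete game. Since Theorems~\ref{altfelsobecs} and~\ref{multiplealpern} already give $v_M(n,d,k)\le v_A(n,d,k)\le \frac{k^d}{\binom{n+d-1}{d}}$ for all parameters, the triplet $(n,d,k)$ is accurate as soon as we prove the matching lower bound $v_M(n,d,k)\ge \frac{k^d}{\binom{n+d-1}{d}}$; the equality $v_M=v_A$ then follows automatically from the sandwich. So it suffices to exhibit a randomized searcher strategy that wins with probability at least $\frac{k^d}{\binom{n+d-1}{d}}$ against \emph{every} hider. Using Lemma~\ref{kevesebbrenemerimeg} we may assume every query has size exactly $k$, and we first record the basic observation that the searcher wins if and only if each of her queries hits a remaining treasure: each successful query removes exactly one treasure, so a win is precisely a run of $d$ consecutive hits, while a single miss is an immediate loss. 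Fixing the searcher's strategy, it is then enough to show that for every placement $c$ (a multiset of $d$ boxes) and every adaptive removal rule $\rho$ of the hider, the probability of $d$ consecutive hits is at least the target; write $W(c)$ for this win probability minimized over all $\rho$, so the goal becomes $\min_c W(c)\ge \frac{k^d}{\binom{n+d-1}{d}}$.

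My approach would be an induction on $d$, aiming for the one-step bound
\[
 v_M(n,d,k)\ \ge\ \frac{dk}{n+d-1}\,v_M(n,d-1,k),
\]
with base case $v_M(n,1,k)=k/n$ (a single uniformly random $k$-subset hits a lone treasure with probability exactly $k/n$, and the hider can do no better). This recursion telescopes to $\prod_{j=1}^{d}\frac{jk}{n+j-1}=\frac{k^d}{\binom{n+d-1}{d}}$, which is exactly the target, so establishing one step suffices. Concretely I would let the searcher's first query reduce a $d$-treasure instance to a $(d-1)$-treasure one: after the first hit the hider removes a treasure and the residual configuration is again an $(n,d-1,k)$ instance, on which the searcher can fall back on the inductive strategy $S_{n,d-1,k}$ (legitimate since $n\ge d(k-1)+1$ implies $n\ge (d-1)(k-1)+1$). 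The structural meaning of the hypothesis is visible in the stars-and-bars encoding of configurations as $d$-subsets of $[n+d-1]$: the inequality $n\ge d(k-1)+1$ is equivalent to $n+d-1\ge dk$, i.e.\ to there being enough room for the searcher to ``hedge'' over $k$ candidate positions for each of the $d$ treasures, which is the combinatorial source of the factor $k^d$.

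The main obstacle is that the two naive factors do not simply multiply. A single uniform first query hits a configuration supported on $s$ distinct boxes with probability $1-\binom{n-s}{k}/\binom{n}{k}$, which is as small as $k/n$ for a concentrated configuration ($s=1$), far below the required $\frac{dk}{n+d-1}$ near the threshold. What rescues such configurations is precisely that they are trivial to finish --- once the searcher hits the unique occupied box she learns it and clears the rest with certainty --- whereas ``spread'' configurations are easy to hit but genuinely hard to finish, so the fallback bound $v_M(n,d-1,k)$ on the continuation is too weak to see this. The crux is therefore to choose the first query (in general \emph{not} uniform) together with a continuation that exploits the searcher's posterior, so that the product of ``probability of a useful first hit'' and ``conditional value of the continuation'' is at least $\frac{dk}{n+d-1}\,v_M(n,d-1,k)$ \emph{uniformly} in $c$ and $\rho$. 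To control $\rho$ I would have the searcher resolve treasures in a fixed (say nondecreasing) order of boxes, so that at each step only one ``intended'' treasure can lie in the queried set and the hider's choice of which treasure to reveal is forced or irrelevant. I expect that proving the existence of such a balancing first query --- equivalently, that $W(c)\ge \frac{k^d}{\binom{n+d-1}{d}}$ holds for all configurations simultaneously --- is exactly where the hypothesis $n+d-1\ge dk$ is used in full strength and where the real work lies; carrying it out may require strengthening the plain inductive invariant to one that also tracks the searcher's current posterior over configurations.
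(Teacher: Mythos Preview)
The statement you are attempting to prove is Conjecture~\ref{ujsejtes}, which is explicitly left \emph{open} in the paper. The paper does not contain a proof of it; it only establishes the special cases $d=2$ (Theorem~\ref{d=2}) and $d=3$ (Theorem~\ref{d=3}) by writing down explicit randomized search strategies and checking by hand that they achieve the target value against every hiding allocation, together with a handful of further computer-verified instances such as $(5,4,2)$. So there is no ``paper's own proof'' to compare against for general $d$.

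Your proposal is not a proof either, and you essentially say so yourself. The entire argument hinges on the one-step bound
\[
v_M(n,d,k)\ \ge\ \frac{dk}{n+d-1}\,v_M(n,d-1,k),
\]
which you do not establish. You correctly diagnose why the naive ``uniform first query, then play the inductive $(d-1)$-strategy'' fails: for concentrated configurations the first-hit probability is only $k/n$, not $dk/(n+d-1)$, so the product undershoots. Your fix---use the posterior to tailor the continuation---then destroys the inductive structure: once the continuation depends on what happened in step one, you can no longer simply plug in the scalar $v_M(n,d-1,k)$; you would need a strengthened invariant controlling the value of \emph{every} residual posterior, which is exactly what you concede at the end (``may require strengthening the plain inductive invariant''). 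That is not a technicality; it is the whole problem.

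It is also worth noting that the paper's explicit $d=3$ strategy does \emph{not} factor as ``one query followed by an optimal $(n,2,k)$ strategy'': the second move branches with probability $\frac{nk^2}{\binom{n+2}{3}}$ into two structurally different continuations whose weights are tuned to equalize the win probability across the three allocation types simultaneously. The key feature that makes those strategies work (Observation~\ref{Noaskback}: never re-query a box from which no treasure was revealed) neutralizes the hider's reveal choice, but it also forces the searcher to burn $k-1$ fresh boxes per step, which is precisely where the threshold $n\ge d(k-1)+1$ enters. Your proposed device of ``resolving treasures in a fixed order'' does not obviously achieve the same neutralization, and in any case you give no construction. As it stands, the proposal is a reasonable heuristic outline for why the conjecture should be true, but the decisive step remains open---as it does in the paper.
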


In Remark \ref{accurateconjsharp}, we show that this bound is the lowest possible.

The best known bound is due to Pálvölgyi:

\begin{thm}[P\'alvölgyi \cite{Domotor}, Theorem 2]\label{Domotoraccurate}
    If $n$ is large enough compared to $k$ and $d$, then $(n,d,k)$ is accurate.
\end{thm}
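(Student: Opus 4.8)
The plan is to reduce the statement to a lower bound and then build an explicit randomized searcher strategy. Since Theorems~\ref{altfelsobecs} and~\ref{multiplealpern} already give the chain $v_M(n,d,k)\le v_A(n,d,k)\le \frac{k^d}{\binom{n+d-1}{d}}$, it suffices to prove the matching lower bound $v_M(n,d,k)\ge \frac{k^d}{\binom{n+d-1}{d}}$ for all sufficiently large $n$; the three quantities then coincide and $(n,d,k)$ is accurate by definition. As $v_M$ is the value of a finite game, lower-bounding it only requires exhibiting one randomized searcher strategy that wins with probability at least $\frac{k^d}{\binom{n+d-1}{d}}$ against every hider, where the hider controls both the initial placement and the adaptive choice of which treasure to reveal. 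By Lemma~\ref{kevesebbrenemerimeg} we may also assume every query has exactly $k$ boxes.

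To find such a strategy I would first pass to a cleaner coordinate system. Encode a treasure distribution by its sorted positions $1\le a_1\le\cdots\le a_d\le n$ and apply the stars-and-bars substitution $b_i=a_i+i-1$, which turns it into a strictly increasing tuple, i.e.\ a $d$-element subset $B=\{b_1<\cdots<b_d\}\subseteq[n+d-1]$; there are exactly $\binom{n+d-1}{d}$ of these. In this coordinate the searcher strategy I propose is a ``windowed guessing'' scheme: she commits to a random tuple of $d$ pairwise disjoint length-$k$ windows of consecutive positions in $[n+d-1]$ (taken cyclically), the $i$-th window being her guess for the location of $b_i$, and then plays adaptively, always querying the window meant to hold the smallest not-yet-found treasure. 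Because she searches in increasing order, the fixed shift $b_i=a_i+i-1$ can be undone round by round to produce the actual box-query in $[n]$.

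The key computation is a double count. Each such staircase of disjoint windows ``catches'' exactly $k^d$ subsets $B$, namely those with $b_i$ in the $i$-th window, since there are $k$ choices per window. Provided the windows are genuinely disjoint, the queried window always contains exactly one surviving treasure, so the hider is forced to reveal that treasure and his adaptivity is neutralized; hence a caught $B$ is a guaranteed win. If in addition the random family of staircases is arranged so that every subset $B$ is caught by the same number of staircases, then the uniform choice wins with probability exactly $\frac{k^d}{\binom{n+d-1}{d}}$, by equating the two ways of counting incidences between staircases and the subsets they catch. This is exactly where the hypothesis that $n$ is large is spent: it guarantees that $d$ disjoint cyclic windows of length $k$ fit inside $[n+d-1]$ without wraparound collisions, which both forces the reveals and makes every staircase catch exactly $k^d$ subsets.

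The main obstacle is establishing the uniformity of the catch count, i.e.\ that each $B$ is caught equally often. Merely having each staircase catch $k^d$ subsets forces the \emph{average} catch count to equal $\frac{k^d}{\binom{n+d-1}{d}}\cdot(\text{number of staircases})$, so a non-uniform family would only guarantee the minimum, which is at most the average, yielding a strategy that undershoots the target; equality therefore genuinely requires a regular, design-like family, and producing one is the crux of the argument. The strict increase $b_i=a_i+i-1$ is what removes the degenerate ``several treasures in one box'' cases that would otherwise break regularity, and a symmetric randomization over window placements (for instance combining a uniform cyclic rotation with a uniformly random admissible gap pattern) is the natural candidate to enforce it. I expect verifying this exact regularity, together with pinning down how large $n$ must be for the disjointness to hold, to be the technical heart of the proof; the disjoint-window approach is inherently wasteful, which is consistent with it not reaching the conjectured threshold $n\ge d(k-1)+1$ of Conjecture~\ref{ujsejtes}.
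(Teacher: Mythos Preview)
Your reduction to a lower bound is correct, but the step you treat as settled --- that disjoint windows neutralize the hider --- is where the plan breaks. Disjointness of the $W_i$ in $[n+d-1]$ does \emph{not} ensure each query $Q_i=W_i-(i-1)$ in $[n]$ meets only one surviving treasure. Take $d=k=2$, $W_1=\{2,3\}$, $W_2=\{4,5\}$: then $Q_1=\{2,3\}$ and $Q_2=\{3,4\}$ overlap in box $3$. The allocation $a=(2,3)$ has $b=(2,4)$ and is ``caught'', yet $Q_1$ hits both treasures; the adversary reveals the one in box $3$, and $Q_2$ is then empty. This staircase therefore wins against only three allocations, not $k^d=4$, so the double count undershoots. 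Demanding larger gaps so that the $Q_i$ become disjoint in $[n]$ does restore ``caught $\Rightarrow$ win'', but then adjacent $b$-values such as $b=(1,2)$ (two treasures in box $1$) are caught by no staircase at all, and the uniformity you flag as the crux fails outright rather than merely being hard to verify; cyclic rotation in $[n+d-1]$ does not repair this, since the inverse stars-and-bars map is not rotation-equivariant.

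P\'alv\"olgyi's actual argument (reproduced for real $k$ in Section~\ref{nemegeszk}) is genuinely adaptive and avoids the bijection. The searcher samples a uniform target allocation $\mu$; given the current Young diagram $\lambda$ of revealed treasures, she re-asks the last successful box with probability $k\cdot p_\lambda(n,d,1)$ together with $k-1$ fresh boxes, and otherwise asks $k$ fresh boxes. The point of the analysis is that at each step her chance of hitting the correct next box is exactly $k$ times the $k=1$ chance, so the winning probability is $k^d/\binom{n+d-1}{d}$; the largeness of $n$ is used only to guarantee $k\cdot p_\lambda(n,d,1)\le 1$ and a supply of fresh boxes.
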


We conjecture the following monotonicity claims:
\begin{sejt}\label{pontosmonoton}
If the triplet $(n,d,k)$ of integers is accurate, then so is $(n',d',k')$ if $n' \geq n$, $d' \leq d$, and $k'\leq k$.  
\end{sejt}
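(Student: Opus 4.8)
The plan is to reduce the statement to three single-parameter steps and compose them: assuming each of the implications ``accurate $(n,d,k)\Rightarrow$ accurate $(n+1,d,k)$'', ``accurate $(n,d,k)\Rightarrow$ accurate $(n,d-1,k)$'', and ``accurate $(n,d,k)\Rightarrow$ accurate $(n,d,k-1)$'' (the latter two requiring only $d\ge 1$, $k\ge 1$), the general claim follows by applying them repeatedly. Throughout write $F(n,d,k)=\frac{k^d}{\binom{n+d-1}{d}}$. Since Theorems \ref{altfelsobecs}.\ and \ref{multiplealpern}.\ give $v_M\le v_A\le F$ unconditionally, in each step it suffices to prove the matching searcher lower bound $v_M\ge F$ in the derived game; equality then forces all three quantities to coincide, i.e.\ accuracy. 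A direct computation of the ratios of $F$ shows the targets are $F(n+1,d,k)=\frac{n}{n+d}F(n,d,k)$, $F(n,d-1,k)=\frac{n+d-1}{kd}F(n,d,k)$, and $F(n,d,k-1)=\left(\frac{k-1}{k}\right)^{d}F(n,d,k)$, so each step reduces to transferring an optimal searcher strategy while losing at most the corresponding factor.

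Because $v_M=\min_{\text{hider}}\max_{\text{searcher}}\Pr[\text{win}]$, it is enough to exhibit, for every hider in the derived game, a searcher response winning with probability at least the target value. I would obtain such a response by mapping the derived hider to an original-game hider, invoking $v_M(n,d,k)=F(n,d,k)$ to get a response winning with probability at least $F(n,d,k)$ there, and pulling it back. For $k\to k-1$ the two games share the same hiding configuration, so the map is the identity and one simulates the optimal $k$-query strategy $\sigma$ (which by Lemma \ref{kevesebbrenemerimeg}.\ may be assumed to query exactly $k$ boxes) by discarding one uniformly random box from each query: conditioned on $\sigma$'s query hitting, the trimmed $(k-1)$-query still hits unless the discarded box was the unique treasure-containing box of that query, an event of probability at most $\tfrac1k$, which over the $d$ successful queries should yield the factor $\left(\tfrac{k-1}{k}\right)^{d}$. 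For $n\to n+1$ the searcher ignores one box and plays $\sigma$ on the remaining $n$; against the uniform hider a fixed box is empty with probability exactly $\frac{n}{n+d}$ and the conditional configuration is again uniform, reproducing the target factor. For $d\to d-1$ the searcher augments the instance with one extra \emph{phantom} treasure that she places herself, runs the $d$-treasure strategy $\sigma$, and exploits that she knows the phantom's location; the arithmetic gain of $\frac{n+d-1}{kd}$ over $F(n,d,k)$ should come from never needing to spend a genuine query to locate the phantom.

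The main obstacle is upgrading each transfer from ``against the uniform (balanced) hider'' to ``against every hider'': the searcher's guarantee is a minimum over all hiders, and an adversarial hider who concentrates or correlates the treasures can a priori make the ignore-a-box or random-drop strategy perform below the target, so the naive arguments give only a weaker constant. Rigorously the difficulty is the adaptive coupling: the response fed to the simulated original-game strategy depends on the random drop (or on the random ignored box), so the event ``$\sigma$ wins'' is correlated with the events that cause the derived strategy to fail, and the per-step factors do not obviously multiply. Making the coupling drop-independent, or equivalently showing that the uniform hider remains optimal in the derived game, is exactly the crux; I expect closing it genuinely requires accuracy of $(n,d,k)$ rather than a generic monotonicity argument, as suggested by the reappearance of the threshold $n\ge d(k-1)+1$ of Conjecture \ref{ujsejtes} precisely as the condition $\frac{n+d-1}{kd}\ge 1$ under which the $d\to d-1$ factor exceeds one.
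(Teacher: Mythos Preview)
The statement you are trying to prove is \emph{Conjecture} \ref{pontosmonoton} in the paper; the paper does not claim a proof. What the paper does instead is verify Conjecture \ref{ujsejtes} (and hence, implicitly, the relevant instances of Conjecture \ref{pontosmonoton}) only for $d=2$ and $d=3$, by exhibiting explicit optimal search strategies in Theorems \ref{d=2} and \ref{d=3}. So there is no ``paper's own proof'' to compare your proposal against.

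Your proposal is candid about not being a proof: you correctly isolate the decomposition into three single-parameter steps, you compute the target ratios of $F$, and you then flag the genuine obstruction, namely that the strategy-transfer arguments only go through against the uniform hider, while accuracy requires the lower bound against \emph{every} hider. That self-diagnosis is accurate, and the gap is real. For a concrete instance, take the $n\to n+1$ step with the ``ignore a uniformly random box'' construction. If the hider in the $(n+1)$-game places the $d$ treasures in $d$ distinct boxes, the ignored box is empty with probability $\frac{n+1-d}{n+1}$, and this is strictly smaller than the required factor $\frac{n}{n+d}$ for every $d\ge 2$. The shortfall would have to be recovered from the fact that, conditional on the ignored box being empty, the induced $n$-box allocation is not uniform and the original optimal strategy $\sigma$ may beat $F(n,d,k)$ against it; but nothing in the hypothesis ``$(n,d,k)$ is accurate'' quantifies by how much $\sigma$ overperforms against specific non-uniform hiders. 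The same kind of correlation issue arises in your $k\to k-1$ step: the event ``the dropped box was the unique treasure box in this query'' is not independent across rounds once the adversary revealer adapts to your random drops, so the factors $\frac{k-1}{k}$ do not simply multiply.

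In short, your outline is a reasonable attack plan and your identification of the crux matches why the authors left this as a conjecture; but as it stands none of the three single-parameter implications is actually established, so the proposal does not constitute a proof.
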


As one of our main results, we settle a question of P\'alvölgyi \cite{Domotor}, where he asked whether the choice of revealed treasure in the discrete game impacts the value of the game. This problem is discussed in Chapter $\ref{rosszrandomjo}$.

\begin{restatable}{all}{altrevealer}\label{altrevealer}
When the searcher queries more than one box containing a treasure, restricting the choice of the revealed and removed item for the hider can change the game's value.
\end{restatable}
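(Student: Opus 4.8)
The plan is to prove the statement by exhibiting a single explicit triple together with two reveal rules whose values differ; the natural candidate is $(n,d,k)=(4,2,2)$, the smallest case in which the residual uncertainty after one reveal can outstrip the searcher's reach. I would compare the standard game, in which the hider picks which treasure to uncover, with the restricted variant in which this choice is taken away from the hider and handed to the searcher: upon a successful query she is told which of the two queried boxes contain a treasure, and she selects one box from which a treasure is removed. Since every hider strategy available in the restricted game is also available in the standard game, the restricted value is at least the standard value, so it suffices to produce a strict gap. Throughout I would exploit the full $S_4$ symmetry on the four boxes, which guarantees that both players have optimal strategies invariant under relabelling; this lets me restrict to a hider who plays each of the four ``double'' configurations with probability $a$ and each of the six ``spread'' configurations with probability $b$ (so $4a+6b=1$), and to a searcher whose first query is a uniformly random pair.

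For the standard value I would fix the first query to $\{1,2\}$ and track, for each reveal outcome, the posterior distribution of the single remaining treasure; the searcher's best continuation covers the two boxes of largest posterior mass. A short computation shows her winning probability equals $2\max(a+b,\,2b)$, and --- this is the point --- it is independent of how the hider breaks ties when both queried boxes contain a treasure. Minimising over $4a+6b=1$ gives value $2/5=\tfrac{k^d}{\binom{n+d-1}{d}}$, attained at the uniform distribution $a=b=1/10$; in particular $(4,2,2)$ is accurate and the hider's freedom to choose the revealed treasure is worthless to him.

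For the restricted value the only change is in the event that the first query $\{1,2\}$ hits both of its boxes: now the searcher learns this, removes one treasure, and has pinned the last treasure to the other box, so this event becomes a guaranteed win. This adds exactly the spread mass $b$ to the previous expression, giving winning probability $b+2\max(a+b,\,2b)$; minimising over $4a+6b=1$ now yields $1/2$. Hence the restricted value $1/2$ strictly exceeds the standard value $2/5$, which proves the statement. Conceptually, the hider defended the standard game down to $2/5$ precisely by using spread configurations, and it is exactly those configurations that collapse into certain losses once the reveal is no longer his to control.

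I expect the main obstacle to be getting the information model exactly right and isolating why the gap appears. In the standard game the searcher learns only the single box the hider chose to reveal, so a both-hit configuration is statistically entangled with the double and with the single-overlap spreads, and the hider's tie-breaking probability cancels out of the coverage bound; I would need to verify carefully that no deterministic or randomised \emph{forced} reveal rule changes this (they do not), so that the gap is genuinely due to reassigning the choice to the searcher rather than to any incidental information leak. The remaining care is in certifying optimality: I would invoke the symmetrisation argument to reduce to symmetric strategies, check that the uniform-first-query searcher guarantees the claimed value against every hider, and confirm that a symmetric hider is a best defence in each variant, so that the two minimax values are exactly $2/5$ and $1/2$.
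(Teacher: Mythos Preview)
Your computations are correct, but the ``restricted variant'' you analyse is not one of the reveal rules the paper has in mind, and this difference is exactly what makes your example fail. In the paper, ``restricting the hider's choice'' means replacing the adversary revealer by either the \emph{random} revealer or the \emph{cooperative} revealer (who follows a strategy fixed in advance with the searcher). In all three variants the searcher receives only one of $k$ possible signals per step---namely, the index of the revealed box---so the counting argument of Theorem~\ref{altfelsobecs} still applies and every variant satisfies $v\le k^d/\binom{n+d-1}{d}$. Your rule instead tells the searcher \emph{which of the queried boxes contain a treasure}; for $k=2$ that is one of three signals, the execution tree has more leaves, and the $k^d$ bound no longer holds. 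In short, the jump from $2/5$ to $1/2$ you compute for $(4,2,2)$ is produced by this extra information channel, not by taking the choice away from the hider.

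Concretely, $(4,2,2)$ is accurate by Theorem~\ref{d=2}, so the paper's chain of inequalities forces $v_M(4,2,2)=v_{Mr}(4,2,2)=v_{Mc}(4,2,2)=2/5$; your own standard-game analysis already shows the hider's tie-breaking probability $p$ cancels, and a cooperative revealer (who can only encode ``both boxes hit'' into one of the two reveal labels) cannot do better. Hence under the paper's interpretation there is \emph{no} gap at $(4,2,2)$. The paper therefore works with a non-accurate triple, $(3,3,2)$, and computes all three values explicitly: $v_M=3/5$, $v_{Mr}=12/19$, $v_{Mc}=2/3$. To repair your approach you would have to move to such a triple and compare the adversary game with the random or cooperative revealer (not with a rule that enlarges the searcher's information set).
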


We finish Chapter 3 with examining the value of the game when the number of treasures tends to infinity.

\begin{restatable}{all}{vegtelend}\label{vegtelend}
For all integers $n,k \geq 2$ there exists a constant $c(n,k) > 0$ such that for all $d$,  $v_M(n,d,k) \geq c(n,k)$.

\end{restatable}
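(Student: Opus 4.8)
The plan is to exhibit an explicit randomized searcher strategy and to lower bound its winning probability by a quantity that depends on $n$ and $k$ but not on $d$. First I would reduce to the case $k=2$. Because the searcher is only required to query \emph{at most} $k$ boxes, a searcher with budget $k$ can mimic a searcher with budget $2$ by simply querying two boxes each round; hence $v_M(n,d,k)\ge v_M(n,d,2)$ for all $k\ge 2$, and it suffices to bound $v_M(n,d,2)$ from below. The case $n=2$ is immediate, since then $k=n$ and querying both boxes every round wins with certainty. For $n\ge 3$ I would set up an induction on $n$, proving a reduction of the form $v_M(n,d,2)\ge q(n)\,v_M(n-1,d,2)$ with $q(n)>0$ independent of $d$; iterating down to $n=2$ then yields the bound $v_M(n,d,k)\ge \prod_{j=3}^{n} q(j)>0$.

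The strategy driving the reduction is guided by the following observation. Whatever the two players do, the set of occupied boxes only shrinks during the game, and the very last treasure removed lies in some box $b^{\ast}$; this $b^{\ast}$ is occupied in \emph{every} round. Consequently, if the searcher could guarantee that her query always contains $b^{\ast}$, she would never make an empty query and would win with certainty. She cannot know $b^{\ast}$ in advance, so instead she maintains a single \emph{anchor} box that she treats as a candidate for $b^{\ast}$, always pairs it with an exploratory box, and updates the anchor only in response to the hider's reveals, using private randomness to decide whether to keep or switch. While the current anchor is genuinely occupied every query is safe; the reduction to $n-1$ boxes then comes from arguing that, with probability bounded below by $q(n)$, the searcher either locks onto $b^{\ast}$ or correctly certifies one box as exhausted and continues on the remaining $n-1$ boxes without ever having missed.

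The hard part is the adversarial revealer. As Theorem~\ref{altrevealer} shows, the hider's freedom to choose which treasure to reveal genuinely affects the game, and here it lets the hider drain whichever box the searcher anchors on while secretly keeping a reserve in an unqueried box, or reveal from the exploratory box to lure the searcher into abandoning a still-occupied anchor. The key to controlling this is the monotone \emph{support chain}: the occupied set passes through at most $n$ distinct values, so there are at most $n-1$ moments at which a box is genuinely emptied, and each box can surprise the searcher (reveal for the first time, or be emptied) only once. I would formalize this as an $O(n)$ bound on the number of \emph{risky decisions} the searcher must make, and show that the private randomness in the anchor update makes each such decision survive with a probability bounded below by a constant depending only on $n$. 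Multiplying these constantly-many survival probabilities gives a lower bound of the form $q(n)^{\,O(n)}$, positive and, crucially, independent of $d$. The main obstacle to making this rigorous is ruling out that the hider forces unboundedly many (in $d$) genuinely ambiguous decisions; this is exactly where the finiteness of the support chain, rather than the number of rounds, must be leveraged.
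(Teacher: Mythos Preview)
Your key insight—that the number of genuinely risky moments is controlled by the length of the support chain (at most $n$ box-emptyings) rather than by $d$—is exactly right, and it is the same insight that drives the paper's proof. However, the execution you propose is considerably more convoluted than necessary, and the induction-on-$n$ framing does not fit the argument cleanly.

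The paper's proof is direct and short: the searcher always anchors on the box from which the \emph{last} treasure was revealed, and pairs it with $k-1$ other boxes chosen uniformly at random from the remaining $n-1$. A step can only fail if the anchor has just been emptied, and since each box is emptied at most once, there are at most $n$ risky steps regardless of $d$. At a risky step where $i$ boxes are currently empty, the fresh randomness in the $k-1$ exploratory boxes gives survival probability $1-\binom{i-1}{k-1}/\binom{n-1}{k-1}$. Multiplying these (together with the first-step factor $k/n$) yields the explicit constant
\[
c(n,k)=\frac{k}{n}\cdot\prod_{i=k}^{n-1}\left(1-\frac{\binom{i-1}{k-1}}{\binom{n-1}{k-1}}\right)>0.
\]
No induction on $n$, no reduction to $k=2$, and no private randomness in the anchor update is needed: the rule ``always switch to the last-revealed box'' already neutralizes both of the adversarial tricks you worry about (draining the anchor, or luring a switch), because the new anchor is guaranteed to have been nonempty at the moment of the switch.

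Your inductive framing runs into a genuine difficulty: to reduce to the $(n-1)$-box game you need the searcher to ``correctly certify one box as exhausted'', but against an adversarial revealer she never receives such a certificate without risking a miss—the only way she learns a box is empty is by querying it and failing. The support-chain bound sidesteps this entirely by never identifying which boxes are empty; it only counts how many emptyings have occurred. So the piece of your plan you flag as the ``main obstacle'' is in fact already handled by your own support-chain observation, once you drop the induction scaffolding and the randomized anchor updates and argue the product bound directly.
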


In Chapter $4$, we introduce different continuous versions of the game and point out a connection between one of those games, and the Manickam-Miklós-Singhi Conjecture. We also prove a theorem related to Ruckle's conjecture on accumulation games.

\section{The Multiple Caching Game}

\begin{obs} \label{segitolemma}
Revealing the location of a treasure at any point in the game does not decrease the value of the game. 
\end{obs}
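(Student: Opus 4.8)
The plan is to prove this by the general principle that extra information can only help the searcher, realized concretely by exhibiting, in the modified game (where the location of some treasure is revealed at a prescribed moment), a searcher strategy that already guarantees the original value $v_M(n,d,k)$. First I would put both games on a common probability space, fixing the hider's and revealer's (adversarial) choices together with whatever internal randomness the searcher uses. The crucial point is that the only formal difference between the two games is the extra signal handed to the searcher at the designated moment — the identity of a box known to contain a treasure; the physical state (which treasures are present, the removal history, and the remaining queries) evolves under exactly the same rules in both games.

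The key step is to let the searcher \emph{ignore} the revealed location: I would define her strategy in the modified game to coincide move-for-move with an optimal strategy $\sigma^\star$ of the original game, disregarding the extra signal entirely. Under this strategy her queries are a function of the same observable history (revealed or removed treasures and her own coin flips) as in the original game, so the two plays can be coupled to be identical. Hence, against any adversarial behaviour in the modified game she wins with exactly the probability that $\sigma^\star$ wins against the corresponding behaviour in the original game. Since $\sigma^\star$ guarantees a win with probability at least $v_M(n,d,k)$ against every adversary, the same bound holds for her in the modified game, so its value is at least $v_M(n,d,k)$.

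I would finish by observing that the adversary gains nothing from the revelation: it enlarges only the searcher's strategy space, not the hider's, so the supremum over searcher strategies of the guaranteed winning probability cannot drop. The one place that needs genuine care — and the main obstacle — is the coupling argument, namely checking that a revealer who notices the searcher ignoring the extra information cannot turn it to his advantage. Concretely, I would argue that every adversary strategy in the modified game projects to an adversary strategy in the original game with the same win/loss outcome: because the searcher's queries never depend on the revealed box, the revealer faces identical decision points in both games, so the outcome distributions match and the inequality $v' \ge v_M(n,d,k)$ follows, which is exactly the claim that revealing does not decrease the value.
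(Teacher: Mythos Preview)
Your proposal is correct and follows exactly the same idea as the paper's proof: the searcher simply ignores the revealed location and plays her original optimal strategy, so the value cannot decrease. The paper dispatches this in two sentences, whereas you spell out the coupling and the projection of adversary strategies in detail, but the underlying argument is identical.
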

\begin{proof}
If the hider tells the position of a treasure, the searcher can forget the bonus information and play following her original strategy. Therefore the value of the game cannot decrease.
\end{proof}

\begin{lemma}\label{ajandek}
Revealing the location of a treasure and removing it at any point in the game does not decrease the value of the game. 
\end{lemma}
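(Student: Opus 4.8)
The plan is to prove the statement by a coupling (simulation) argument: from an optimal searcher strategy in the original game I would build a searcher strategy in the gifted game (the one where a treasure at some location $\ell$ is revealed and removed at the current position) that wins at least as often, so the value cannot drop. Write $G_0$ for the game from the current position and $G_2$ for the same position after the gift; the goal is $v(G_2)\ge v(G_0)$. Fix an optimal searcher strategy $\sigma_0$ for $G_0$, which by definition guarantees winning probability at least $v(G_0)$ against \emph{every} hider strategy (placement together with all reveal choices).

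The searcher in $G_2$ runs a virtual copy of $\sigma_0$ in which the gifted treasure is pretended to be still present at $\ell$. I would maintain the invariant that the virtual configuration equals the real configuration plus one extra treasure at $\ell$ up to a designated moment, and equals it afterwards. While the gifted treasure is still ``pending,'' each time $\sigma_0$ issues a virtual query $S$ with $\ell\notin S$ she issues the same real query; since the two configurations differ only at $\ell$, the real query succeeds exactly when the virtual one does, and she reports the real reveal back to the virtual game (this is where the idea of Observation \ref{segitolemma}, that using the extra information never hurts, enters). The one delicate move is the first virtual query $S$ with $\ell\in S$: here I would have her \emph{collect the gifted treasure in the virtual game for free}, i.e.\ report that the virtual hider reveals the treasure at $\ell$ and make \emph{no} real query at all (a skip). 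After this skip the virtual and real configurations coincide, and from then on she plays $\sigma_0$'s queries directly in $G_2$, keeping the two games in lockstep.

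The crux, and the step I expect to be the main obstacle, is exactly this skip. The naive simulation (always querying $S$, or $S\setminus\{\ell\}$) breaks precisely when the only treasure in $S$ is the gifted one: then the real query finds nothing and the searcher loses, while the virtual game survives by collecting the gifted treasure. The skip dissolves this, and two points make it legitimate. First, since $\sigma_0$ guarantees at least $v(G_0)$ against every hider, and in particular against the reveal choice ``expose the treasure at $\ell$,'' the searcher may force that virtual reveal whenever $\ell\in S$ without needing the hider's cooperation. Second, because a treasure at $\ell$ can only ever be revealed by a query containing $\ell$, the gifted treasure is never revealed in the virtual game before this first $\ell$-query, so the invariant holds up to the skip; one must also handle the case where box $\ell$ holds several treasures, but declaring that specifically the gifted copy is the one revealed keeps the bookkeeping consistent.

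Finally I would close the argument by checking that this strategy wins whenever $\sigma_0$ does. A real query fails only when the corresponding virtual query fails (the skip issues no real query and always succeeds virtually), so the searcher never loses unless $\sigma_0$ loses; and if $\sigma_0$ collects all treasures, then the gifted one is accounted for by the skip and the remaining treasures are collected by the synchronized real queries, so she collects every treasure present in $G_2$. Coupling a fixed $G_2$-hider with the induced virtual $G_0$-hider then yields winning probability at least $v(G_0)$ against every $G_2$-hider, hence $v(G_2)\ge v(G_0)$; since the position was arbitrary, this is the claim.
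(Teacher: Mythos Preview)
Your proposal is correct and follows essentially the same approach as the paper: the searcher in the gifted game simulates her original strategy, and at the first query containing the gifted location $\ell$ she \emph{skips} the real query and imagines the hider revealed the gifted treasure there, after which the two games run in lockstep. Your write-up is more explicit about the invariant and about why the forced virtual reveal is legitimate (namely, because $\sigma_0$ is guaranteed against \emph{every} reveal rule, including ``always expose the gifted copy first''), but the underlying coupling is identical to the paper's.
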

\begin{proof}
Let us call such a treasure a \emph{gift}.
The searcher can simulate her strategy in the original game:
She can continue with the same queries as she would in the original game until she queries the location of the removed treasure. When this happens, she omits this query and instead imagines that the hider's response to this was revealing and removing the gifted treasure.

In the original game, the hider can always reveal the gifted treasure when it's first queried, and with this strategy every execution of a modified game and the corresponding (simulated) original game would be the same, therefore in the original game he can guarantee a game value not higher than that of the modified game.
\end{proof}

\begin{all} \label{dremonoton}
The value of the Multiple Caching Game is monotonically decreasing in parameter $d$: $v_M(n,d+1,k) \leq v_M(n,d,k)$ for any $n,d,k$ integers.
\end{all}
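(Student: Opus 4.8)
The plan is to reduce the $(d+1)$-treasure game to the $d$-treasure game by letting the hider give away one treasure, using Lemma~\ref{ajandek} precisely to fix the direction of the inequality. I would start from the Multiple Caching Game with $d+1$ treasures and consider the modified game $G'$ in which, at the very beginning, one designated treasure is revealed and immediately removed. Lemma~\ref{ajandek} asserts that such a gift cannot decrease the value, so $v_M(n,d+1,k)\le v(G')$. This is the only step that uses the ``does not decrease'' direction of the lemma, and it is exactly what orients the bound the right way: the extra treasure should make the searcher's task harder, so handing it over for free can only help her and hence only raise the value.

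The second step is to identify $v(G')$ with $v_M(n,d,k)$. For the upper bound $v(G')\le v_M(n,d,k)$ I would exhibit the following hider strategy in $G'$: place the gifted treasure in a fixed box (say box $1$) independently of everything else, and place and subsequently manage the remaining $d$ treasures according to an optimal hider strategy in the $d$-treasure game. Once the gifted treasure is revealed and removed, the remaining position is literally an instance of the $d$-treasure game, and since the gift was placed independently, its revelation leaks no information about the other $d$ treasures. Against this hider strategy the searcher therefore wins $G'$ with probability at most $v_M(n,d,k)$. Chaining the two inequalities gives $v_M(n,d+1,k)\le v(G')\le v_M(n,d,k)$, as desired.

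The step needing the most care is the no-information-leak claim: I must ensure that the conditional distribution of the $d$ ``real'' treasures, given the (now removed) gifted treasure, is exactly the optimal $d$-game distribution, so that the post-gift game is a faithful copy of the $d$-treasure game rather than a correlated variant that could aid the searcher. Fixing the gift's location independently of the real treasures resolves this, and it is the one subtlety I would write out explicitly. The degenerate range $k\ge n$, where both values equal $1$, can be noted in passing but is already subsumed by the argument.
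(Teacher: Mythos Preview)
Your proposal is correct and follows essentially the same route as the paper: apply Lemma~\ref{ajandek} to the $(d+1)$-treasure game to pass to the gifted game $G'$, then observe that the hider can place the gifted treasure independently of the other $d$ treasures so that its removal leaks no information and the residual game is exactly the $(n,d,k)$-game. The paper compresses your second step into the single sentence ``the hider does not have to give away any information by hiding and immediately revealing and removing the $(d{+}1)$-st treasure''; your more explicit treatment of the independence/no-leak point is a welcome clarification but not a different argument.
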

\begin{proof}
Suppose that the $(d+1)$-treasure game has value $v_M(n,d+1,k)$. For any starting hiding position, we assume that the hider gives a gifted treasure to the searcher. By  Lemma \ref{ajandek}, the value of this game is at least $v_M(n,d+1,k)$. Notice that this game is equivalent to the $(n,d,k)$-game, since the hider does not have to give away any information by hiding and immediately revealing and removing the $(d+1)$-st treasure.
\end{proof}

Now we can prove Lemma \ref{kevesebbrenemerimeg}:

\kevesebbrenemerimeg*

\begin{proof}
Let the searcher play one of her optimal strategies. We will slightly modify it:
If at any time she would query $l<k$ locations, we modify the strategy by adding some arbitrary $k-l$ locations into the query. After that, if the hider removes a treasure from the original $l$ locations, then the searcher will just continue with her strategy as if it was the original query.
If the question fails, the original question would have failed too.\\
If the hider removes a treasure from another box, then she keeps repeating the same question until she gets one from the original $l$ boxes or loses. If she loses, she would have lost with the original strategy too. If by the end of this repetition, she gets a treasure from the $l$ boxes, and $M$ additional treasures from the other $k-l$ boxes, then we claim that this new state is at least as good as the original result of the $l$-box query would be for the searcher. Imagine that for the original $l-$query, hider removes the same treasure as at the end of this process, and then gifts the other $M$ to the searcher. This does not decrease the value of the game if the searcher adapts her strategy as described in Lemma $\ref{ajandek}$, therefore there is an optimal strategy which queries the maximal amount of locations in the given step.

Applying this to each subsequent query yields an optimal strategy where the searcher queries $k$ locations in each step.

\end{proof}

\begin{kov}
When looking for an optimal strategy, it is enough to consider strategies with all questions asking exactly $k$ boxes.
\end{kov}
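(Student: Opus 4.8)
The plan is to derive this corollary immediately from Lemma~\ref{kevesebbrenemerimeg}, essentially as a reformulation of it. Recall that the value $v_M(n,d,k)$ is, by definition, the optimal winning probability the searcher can guarantee over all of her admissible strategies (against a best-responding hider). I would first observe the easy direction: since \emph{query-exactly-$k$} strategies form a subclass of all searcher strategies, the best winning probability achievable within this subclass is at most $v_M(n,d,k)$. This is just the statement that shrinking the strategy space cannot raise the optimum.

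For the reverse direction, I would invoke Lemma~\ref{kevesebbrenemerimeg} directly. Under the hypothesis $k \le n$, that lemma guarantees the existence of at least one \emph{optimal} searcher strategy that queries exactly $k$ boxes in every step. Since this strategy already lies in the restricted subclass and attains the full value $v_M(n,d,k)$, the best winning probability over the subclass is at least $v_M(n,d,k)$. Combining the two inequalities gives equality, so the optimum taken over all strategies coincides with the optimum taken over strategies that always query exactly $k$ boxes; hence it suffices to search within the latter, which is exactly the claim.

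There is essentially no real obstacle here, since the content is entirely carried by the preceding lemma; the corollary is the user-facing ``it is enough to consider'' phrasing of the lemma's existence statement. The only point I would be careful to flag is the degenerate regime: the reduction relies on $k \le n$ (so that ``exactly $k$ boxes'' is feasible at all), matching the hypothesis of Lemma~\ref{kevesebbrenemerimeg}. When $k \ge n$ the searcher can query all boxes every round and is guaranteed to uncover all $d$ treasures, so the value is $1$ and the restriction is either vacuous or trivially satisfied; thus no separate argument is needed beyond a one-line remark.
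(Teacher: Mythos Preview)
Your proposal is correct and matches the paper's approach: the corollary is stated immediately after Lemma~\ref{kevesebbrenemerimeg} with no separate proof, since it is exactly the ``it is enough to consider'' rephrasing of that lemma's existence statement. Your handling of the degenerate case $k\ge n$ is a reasonable extra remark but not needed, as the paper's corollary implicitly inherits the $k\le n$ hypothesis.
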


The following upper bound was shown by Pálvölgyi (\cite{Domotor}, included in proof of Theorem 2.).
\begin{all}[Pálvölgyi \cite{Domotor}]\label{altfelsobecs}
For any integers $n, d, k$,  $v_M(n,d,k)\le\frac{k^d}{\binom{n+d-1}{d}}$. 
\end{all}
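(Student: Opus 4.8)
The plan is to exhibit a single randomized hider strategy that caps the searcher's winning probability at $\frac{k^d}{\binom{n+d-1}{d}}$ regardless of how she plays, and then invoke the minimax theorem. Since the searcher always has a \emph{deterministic} best response to a fixed mixed hider strategy, it suffices to bound the win probability against an arbitrary deterministic searcher strategy $S$. The hider strategy I would use is the natural one suggested by the $k=1$ example: distribute the $d$ treasures uniformly at random among all possible placements, of which there are exactly $\binom{n+d-1}{d}$ by stars-and-bars (a placement is a size-$d$ multiset of the $n$ boxes), and then answer queries by any fixed revealing rule. The whole proof reduces to showing that, whatever this rule is, at most $k^d$ of these $\binom{n+d-1}{d}$ placements lead to a searcher win; dividing by the uniform measure then gives the claimed bound.

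To bound the number of winning placements I would pass to the decision tree of the fixed strategy $S$. By Lemma \ref{kevesebbrenemerimeg} I may assume $S$ queries exactly $k$ boxes at each step. At a successful query the hider reveals and removes one treasure, which sits in one of the $k$ queried boxes; the only information the searcher gains is the identity of that box, so each node of the tree has at most $k$ ``success'' children. A winning play is a run of $d$ consecutive successful queries, because she must extract all $d$ treasures, so the winning leaves of the tree all lie at depth $d$ and there are at most $k^d$ of them.

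The final step is an injectivity argument linking placements to winning leaves. For a fixed placement $D$, the hider's fixed revealing rule makes the entire play deterministic, so $D$ reaches a unique leaf. The key observation is that if this leaf is a winning one, with the revealed boxes being $b_1,\dots,b_d$ in order, then the $d$ successful steps have removed exactly the $d$ treasures of $D$, one from each $b_i$; hence $D$ is forced to equal the multiset $\{b_1,\dots,b_d\}$. Thus two distinct winning placements cannot reach the same winning leaf, so the number of winning placements is at most the number of winning leaves, which is at most $k^d$. Combined with the previous paragraph this yields $v_M(n,d,k)\le \frac{k^d}{\binom{n+d-1}{d}}$.

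I expect the counting step to be the only real obstacle, and it splits into two places where care is needed: first, pinning down exactly what the searcher learns at each step (the index of the revealed box, and \emph{not} the residual multiplicity in it) so that the branching factor is genuinely bounded by $k$; and second, the reconstruction of $D$ from the response sequence, which is what makes the placement-to-leaf map injective. Everything else is bookkeeping and the appeal to minimax.
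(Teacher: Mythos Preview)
Your argument is correct and follows the same route as the paper's: the uniform-random hider allocation, followed by the count of at most $k^d$ winning placements via the $k$-ary decision tree of a deterministic searcher strategy. Your presentation is a bit more explicit (you fix a revealing rule and spell out the injection from winning placements to leaves, whereas the paper compresses this into ``$k^{d-1}$ executions times $k$ final boxes''); the appeals to Lemma~\ref{kevesebbrenemerimeg} and to minimax are both dispensable, since queries of size less than $k$ only shrink the tree and exhibiting a single hider strategy already upper-bounds $v_M$ directly.
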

\begin{proof}

The number of all the possible hiding variations of $d$ treasures in $n$ locations is $\binom{n+d-1}{d}$.
If the hider chooses a hiding strategy uniformly randomly amongst these, then the searcher can win with probability at most $\frac{k^d}{\binom{n+d-1}{d}}$:
Any deterministic search strategy has at most $k^{d-1}$ different possible executions, since the hider has $k$ possible answers in each step. Each such execution has $k$ distributions for which it wins, since the first $k-1$ treasures are fixed by the hider's answers and the last one has to be in of the $k$ queried locations.

\end{proof}

\begin{mj}
This upper bound holds in Alpern's Caching Game too, see Cs\'oka's paper for proof: \cite[Theorem 2.37]{Csoka}.
\end{mj}

\begin{all}[P\'alvölgyi \cite{Domotor}, Cs\'oka \cite{Csoka}]\label{multiplealpern}
If $k$ is an integer, $v_M(n,d,k)\le v_A(n,d,k)$.
\end{all}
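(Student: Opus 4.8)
The plan is to establish the equivalent inequality $v_A(n,d,k)\ge v_M(n,d,k)$: I will show that the searcher is at least as powerful in Alpern's game by turning an optimal searcher strategy $S$ of the discrete game into a continuous searcher strategy $\sigma$ that wins with probability at least $v_M(n,d,k)$. Here the integrality of $k$ is exactly what lets the single number $k$ serve simultaneously as the number of boxes a discrete query may touch and as the continuous digging budget. By Lemma~\ref{kevesebbrenemerimeg} I may assume that $S$ always queries a set $Q$ of exactly $k$ boxes, and I identify the $n$ boxes with the $n$ holes.

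The simulation is \emph{lazy}: $\sigma$ keeps track of the current dug depth in every hole and of the treasures already uncovered. To answer a query $Q$ of $S$, if some hole of $Q$ contains an uncovered treasure not yet used as a reveal, $\sigma$ uses it for free; otherwise $\sigma$ deepens the holes of $Q$ in unison until a new treasure surfaces, and reports the hole where it surfaced as the revealed (and removed) box. Since this report is always a legitimate hider answer and since $S$ is optimal against the worst-case hider responses, feeding it these particular answers can only help: $\sigma$ uncovers all $d$ treasures whenever $S$ wins, hence with probability at least $v_M(n,d,k)$. When a downward phase surfaces several treasures at once, the surplus ones are treated as gifts, which by Lemma~\ref{ajandek} does not hurt and keeps the two runs in lockstep.

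The main obstacle is the budget: I must prove that $\sigma$ never digs a total depth exceeding $k$. This is where the hider's constraint $\sum_i(\text{depth of the deepest treasure in hole }i)\le 1$ and the bound $k\ge 1$ enter. The delicate point is that a careless simulation overspends: if $S$ keeps querying fresh holes, deepening all $k$ holes of each query down to the surfacing depth can cost up to a factor $k$ more than necessary, and a stacked configuration (two treasures in one hole, queried through disjoint $k$-sets) already pushes the naive bound past $k$. The lazy rule is designed precisely to curb re-digging: I would charge each downward phase to the treasure it surfaces and argue, by telescoping the increments hole by hole, that the total depth spent in hole $i$ stays close to the depth of its deepest treasure, so that the grand total is controlled by $\sum_i(\text{deepest depth in hole }i)\le 1\le k$. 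Making this charging argument rigorous — in particular handling a query that deepens a hole in unison without surfacing anything in that hole, so that its cost must be attributed elsewhere — is the crux, and it is the step I expect to require the most care.
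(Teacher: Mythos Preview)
Your simulation is exactly the paper's: translate each discrete $k$-query into ``dig these $k$ holes in unison until a new treasure surfaces, then feed that hole back to $S$ as the revealed box.'' Where your write-up goes wrong is precisely the budget step you flag as delicate. You propose to show that ``the total depth spent in hole $i$ stays close to the depth of its deepest treasure, so that the grand total is controlled by $\sum_i p_i\le 1\le k$.'' That hole-by-hole bound is false: take $k=2$, one treasure in each of holes $1,\dots,d$ at depth $1/d$, and let $S$ query $\{i,d{+}1\}$ in step $i$. The empty hole $d{+}1$ ends at depth $1$, so the sum of final hole depths is $2$, not $\le 1$. In general the wasted digging in treasureless (or already-emptied) holes can be as large as $k-1$, so you cannot attribute each hole's final depth to its own deepest treasure; the ``cost must be attributed elsewhere'' worry you raise is genuine and is not resolved by telescoping inside each hole.

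The paper's (tersely stated) fix is to bound the \emph{time}, not the per-hole depth. Write $\Delta_i$ for the common increment in step $i$; the treasure surfaced in step $i$ sits in some hole $j_i$ at depth $(\text{previous depth of }j_i)+\Delta_i$. Charge $\Delta_i$ (not $k\Delta_i$) to hole $j_i$: telescoping inside each hole gives $\sum_{i:\,j_i=j}\Delta_i\le p_j$, hence $\sum_i\Delta_i\le\sum_j p_j\le 1$. The total dug is then $k\sum_i\Delta_i\le k$. Equivalently, the potential $\Phi=\sum_j\bigl(p_j-\text{current depth of }j\bigr)^{+}$ starts at $\sum_j p_j\le 1$ and decreases at rate at least $1$ whenever you are digging, because at least one queried hole still has a treasure below. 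Either way the factor $k$ comes out globally, so there is nothing to reattribute: your charging scheme becomes correct as soon as you charge only the increment $\Delta_i$, not the whole $k\Delta_i$, to the surfacing treasure.
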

\begin{proof} \textit{(P\'alvölgyi)}
For each initial treasure distribution of the hider in the Multiple Caching Game, we associate set of treasure distrubutions in Alpern's Caching Game: Assume the treasures distributed in the holes the same way, with arbitrary depths (such that their combined maximal depth is at most $1$.)
Given a strategy $S$ for the Multiple Caching Game, we define a searcher strategy $S'$ for Alpern's Caching Game which has greater or equal winning probability for each initial distribution of treasures.
If the next query in $S$ asks $k$ boxes, in our game the searcher will start digging in these $k$ holes simultaneously, with equal speed in all of them. As soon as she finds a treasure, she can imagine that the hider chose to reveal and remove it as a response to her query. 
If there are more treasures at the same depth, she just takes one of them, and only digs the other out later if a query asks the given hole. The searcher continues this strategy throughout the game.
If she ever starts digging in a set of holes without a treasure, then she loses the game as in the Multiple Caching Game. If she could find all treasures with this method in the discrete game for a given initial distribution of treasures, she finds them here too:
Before finding a treasure, she digs a combined depth of $k$ times the depth of the treasure before the step, therefore assuming she always picked $k$ holes containing at least one treasure, she finds all treasures after digging $k$ units (since the treasures are hidden at a combined depth of $1$).

\end{proof}

The following statement was proven in several other papers.

\begin{all} \label{k/n}
For any integers $n, d, k$,  $v_M(n,d,k)\le\frac kn$.
\end{all}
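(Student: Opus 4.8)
The plan is to reduce the general statement to the single-treasure case and then invoke the monotonicity in the parameter $d$ that we have already established. First I would observe that the bound is immediate and essentially tight when $d=1$: in the one-treasure game the searcher wins exactly when her query of $k$ boxes contains the lone treasure, so Theorem \ref{altfelsobecs} applied with $d=1$ gives
$$v_M(n,1,k)\le \frac{k^1}{\binom{n}{1}}=\frac{k}{n}.$$
(Equivalently, the hider placing the single treasure uniformly at random among the $n$ boxes already caps the win probability at $k/n$, since the searcher can only check $k$ of them.)

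Next I would chain the monotonicity statement of Theorem \ref{dremonoton}, which asserts $v_M(n,d+1,k)\le v_M(n,d,k)$ for every $d$. Iterating from $d$ down to $1$ yields
$$v_M(n,d,k)\le v_M(n,d-1,k)\le\cdots\le v_M(n,1,k),$$
and combining this with the first step gives $v_M(n,d,k)\le k/n$, as desired. The conceptual point being used is simply that adding treasures can only hurt the searcher, so the worst case for the bound $k/n$ is the one-treasure game. The only caveat worth flagging is the degenerate regime: implicitly $d\ge 1$ (for $d=0$ the searcher wins outright and the claim is false), and if $k\ge n$ the bound is trivial because the searcher can query every box and $k/n\ge 1$; the real content lies in the range $1\le d$ and $k<n$, which is also precisely where this bound beats the counting bound $\frac{k^d}{\binom{n+d-1}{d}}$.

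There is no serious obstacle here, since the heavy lifting is carried by the two previously proven results (the counting upper bound and monotonicity in $d$); the main step is just recognizing that the extremal hider strategy for the bound $k/n$ is the uniform one in the single-treasure game. If one instead wanted a self-contained argument bypassing Theorem \ref{altfelsobecs}, the alternative would be to exhibit the uniform hiding strategy for $d=1$ directly and argue that no deterministic searcher can improve on querying $k$ of the $n$ equally likely boxes, but routing through the already-established theorems is shorter and cleaner.
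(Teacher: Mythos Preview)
Your proof is correct but takes a genuinely different route from the paper. The paper gives a direct one-line hider strategy: hide all $d$ treasures in the \emph{same} box (chosen uniformly at random). Then the searcher can only win if her very first query contains that box, which happens with probability at most $k/n$; otherwise the first query already fails and she loses. Your argument instead reduces to the $d=1$ case via the monotonicity Theorem~\ref{dremonoton} and then reads off the bound from the counting bound of Theorem~\ref{altfelsobecs}. Both are valid. The paper's approach is more self-contained and elementary---it does not rely on either of the earlier theorems, and in fact the ``all treasures in one box'' strategy is exactly what makes the bound $k/n$ visible. Your approach is cleaner in the sense that it is a pure corollary of what came before, but it is logically heavier, since Theorem~\ref{dremonoton} rests on the gift lemma (Lemma~\ref{ajandek}), which is more work than the direct argument needs. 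Your caveats about $d=0$ and $k\ge n$ are apt and apply equally to the paper's statement.
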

\begin{proof}
If the hider hides all treasures in the same location, then the searcher's first query finds a treasure with a probability of at most $\frac{k}{n}$, therefore the value of the game cannot be higher than this.
\end{proof}

\section{New bounds and strategies for few treasures} 
\subsection{The $(4,3,2)$ game} \label{432}

In this subsection, we introduce our notation for search strategies, while presenting an optimal search strategy for the $(n,d,k)=(4,3,2)$ case. This is the smallest non-trivial case (other than the $(3,3,2)$ game, which was already considered by Pálvölgyi \cite{Domotor}), where Pálvölgyi's conjectured bound $(n\geq dk-1)$ does not hold.

We will show that the value of the Multiple Caching Game is still equal to the upper bound here, so the bounds in Cs\'oka's and in Pálvölgyi's conjectures are not sharp.

Pálvölgyi \cite{Domotor} previously claimed that the upper bound from Theorem \ref{altfelsobecs} cannot be attained in that case by the search strategy. 
By using a new strategy, we managed to reach the theoretical maximum possible value of the game: $\frac{k^d}{\binom{n+d-1}{d}}=\frac{2^3}{\binom{4+3-1}{3}}=\frac25$.

Our strategy is as follows (see Figure \ref{(4,3,2)} for a visual representation):

\begin{itemize}
    \item In the first step, the searcher chooses between her $6$ possible queries with equal probability. We can call the two locations she picked box $1$ and box $2$. We may also assume that she was given a treasure from box $1$ (if she was given one).
    \vspace{3 mm}
    \\  For her second question, she chooses one of two options.
    \vspace{-2 mm}
    \item With $\frac45$ probability, she queries box $1$ and one of the two unqueried boxes with equal probability. We call the chosen one box $3$, the other one will be box $4$. If she receives a treasure from box $1$, then her last question is box $1$ and box $4$. If she receives treasure from box $3$, then the last question is box $3$ and box $4$.
    \item With $\frac15$ probability, she asks boxes $3$ and $4$. If she gets a treasure from box $3$, her last question is box $1$ and box $3$. If she gets a treasure from box $4$, her last question is box $1$ and box $4$.
\end{itemize}
This strategy has $\frac25$ winning probability against any hiding strategy. This strategy is a special case of the $d=3$ general case, for which the proof can be found at Theorem \ref{d=3}. 

In every strategy described in this paper, we will w.l.o.g. assume that the boxes are uniformly randomly permuted before the searcher's first query, and if the hider reveals a treasure in a previously unqueried box, then its index is minimal among such boxes.

\begin{figure}[H]
\begin{center}
\begin{tikzpicture}[line cap=round,line join=round,>=triangle 45,x=0.6cm,y=0.6cm]
\clip(-14.077056627706742,-5.148554262269194) rectangle (18.501305063321407,2.203948110660301);
\draw (-12.009185040850435,-0.7597778487318287) node[font=\small,anchor=north west] {$(1,1,0,0)$};
\draw (-5.004887851246821,1.2537458946719848) node[font=\small,anchor=north west] {$(1,0,1,0)$};
\draw (-5.004887851246821,-2.750677729850206) node[font=\small,anchor=north west] {$(0,0,1,1)$};
\draw (2.799409338356793,1.2537458946719848) node[font=\small,anchor=north west] {$(1,0,0,1)$};
\draw (2.799409338356793,-0.7597778487318287) node[font=\small,anchor=north west] {$(0,0,1,1)$};
\draw (2.7999409338356793,-2.750677729850206) node[font=\small,anchor=north west] {$(1,0,1,0)$};
\draw [->,line width=0.8pt,postaction={decorate,decoration={raise=1ex,text along path,text align=center,text={$1.$}}}] (-2.3,0.8) -- (2.8,0.8);
\draw [->,line width=0.8pt,postaction={decorate,decoration={raise=1ex,text along path,text align=center,text={$3.$}}}] (-2.3,0.8) -- (2.8,-1.15);
\draw [->,line width=0.8pt,postaction={decorate,decoration={raise=1ex,text along path,text align=center,text={$3.$}}}] (-2.3,-3.2) -- (2.8,-3.2);
\draw [line width=0.8pt] (-7,0.8) -- (-5,0.8);
\draw [line width=0.8pt] (-7,-3.2) -- (-5,-3.2);
\draw [->,line width=0.8pt,postaction={decorate,decoration={raise=1ex,text along path,text align=center,text={$1.$}}}] (-9.5,-1.2) -- (-7,-1.2);
\begin{scriptsize}
\draw[color=black] (-5.814392402105076,1.1431038289535264) node {$\frac45$};
\draw[color=black] (-5.874566818387592,-2.8839436578541007) node {$\frac15$};
\end{scriptsize}
\end{tikzpicture}
\caption{On the diagram, the positions of the $1$-s show which boxes were part of the given query. The numbers on the arrows tell us the position of the box from which the searcher received the last treasure. Where the diagram splits into several rows, we write the probability of each query occurring on a segment before the query. }
\label{(4,3,2)}
\end{center}
\end{figure}

Hence we have shown that $(4,3,2)$ is accurate.
\begin{obs}\label{Noaskback}
In this strategy, the searcher never queries a box if she has not received a treasure from it during a previous question (i.e. the box was part of the query, but the hider revealed a treasure from a different box). Therefore, if the hider ever had to choose between the boxes to reveal a treasure from, then she automatically loses, since she doesn't query the unchosen box in any later step again. Thus, the specific treasure given by the hider is irrelevant, making his choice redundant.
\end{obs}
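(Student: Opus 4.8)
The plan is to isolate the one combinatorial fact that drives the observation---that the searcher never queries again a box which appeared in an earlier query but from which she received no treasure (call such a box \emph{passed over})---and then read off the statement about the hider's choice. Because the strategy of Figure~\ref{(4,3,2)} is a finite decision tree with only three two-box queries, the verification of this fact is a short exhaustive check, and the actual content lies in the deduction that follows.

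First I would track the passed-over boxes branch by branch. The opening query is $\{1,2\}$, and by the relabeling convention the revealed treasure comes from box $1$, so box $2$ is passed over; every subsequent query, in both the $\tfrac45$- and the $\tfrac15$-branch, uses only boxes from $\{1,3,4\}$, so box $2$ is never asked again. For the second query there are two cases. In the $\tfrac45$-branch she asks $\{1,3\}$: a treasure from box $1$ leaves box $3$ passed over and gives final query $\{1,4\}$, while a treasure from box $3$ leaves box $1$ passed over and gives final query $\{3,4\}$. In the $\tfrac15$-branch she asks $\{3,4\}$: a treasure from box $3$ leaves box $4$ passed over with final query $\{1,3\}$, and a treasure from box $4$ leaves box $3$ passed over with final query $\{1,4\}$. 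In each of these four branches the box passed over at the second query is absent from the third, and since nothing follows the third query the check is complete.

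It then remains to derive the conclusion, which I would phrase as follows. The hider faces a genuine choice at a step precisely when both queried boxes still contain a treasure; once he reveals and removes one, the other queried box is passed over while still holding a treasure, and by the fact just proved it is never queried again, so that treasure is never found and the searcher loses---regardless of which of the two boxes he chose. Hence whenever the hider has a choice the searcher loses no matter what, and whenever he has no choice there is nothing to decide, so in every play the identity of the revealed treasure has no effect on the win/loss outcome. I do not expect a real obstacle here: the argument is a finite verification followed by this deduction. The only point needing care is the final logical step---the assertion is not that the searcher wins, but that fixing or restricting the hider's choice leaves every outcome, and therefore the value of the game, unchanged.
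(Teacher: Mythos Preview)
Your proposal is correct and mirrors exactly what the paper intends: the observation is stated without a separate proof environment because it is meant to follow by direct inspection of the strategy tree in Figure~\ref{(4,3,2)}, and your branch-by-branch check together with the deduction about the unchosen box is precisely that inspection spelled out. There is no alternative route to discuss here.
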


\subsection{New Conjecture on triplet accuracy}

Based on the $(4,3,2)$-game, we stated Conjecture \ref{ujsejtes} on when should a triplet be accurate. 
Eliminating the importance of the choice of the revealed treasure limits the options of the adversary, therefore in this case the searcher could have a strategy exhibiting the property described in Observation \ref{Noaskback}.

If we consider such a strategy, each question takes $k-1$ boxes out of the game, except the last one, therefore the searcher eliminates at least $(d-1)(k-1)$ boxes in the first $(k-1)$ queries. She also needs $k$ more boxes for the last question, which gives the requirement $n\ge(d-1)(k-1)+k=d(k-1)+1$ in the conjecture.

\begin{mj}
If $n\geq d(k-1)+1$, then $\frac{k^d}{\binom{n+d-1}{d}}\le \frac kn$, hence Theorem \ref{altfelsobecs} is the stronger upper bound in these cases.
\end{mj}
\begin{proof}
    The inequality $\frac{k^d}{\binom{n+d-1}{d}}\le \frac kn$ can be simplified to $d k^{d-1} \leq \binom{n+d-1}{d-1}$. Using the lower bound on $n$, it is sufficient to show that $d k^{d-1} \leq \binom{dk}{d-1}$.
    This is clear since the left side of the inequality represents choosing $d-1$ elements from a $dk$-element set by dividing the elements into groups of size $k$ and selecting one element from $d-1$ of these groups.
\end{proof}

\begin{mj} 
Assuming Conjecture \ref{pontosmonoton}, the accuracy of the triplets $(d(k-1)+1,d,k)$ is sufficient to verify conjecture \ref{ujsejtes}.
\end{mj}

\begin{mj}\label{accurateconjsharp}
Using Theorem \ref{dremonoton} we can show that the bound on $n$ in Conjecture \ref{ujsejtes} is the best possible: By monotonicity in $d$, if $(n,d,k)$ and $(n,d-1,k)$ are accurate, then $\frac{k^d}{\binom{n+d-1}{d}}\le \frac{k^{d-1}}{\binom{n+(d-1)-1}{d-1}}$, from which  $ d(k-1)+1\le n$ follows.
\end{mj}

Now we show that Conjecture \ref{pontosmonoton} holds for $d=2,3$ by presenting optimal search strategies:

\begin{all} \label{d=2}
If $n \geq d(k-1)+1$ and $d=2$, then the triplet $(n, d, k)$ is accurate.
\end{all}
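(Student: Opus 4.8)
The plan is to prove the lower bound $v_M(n,2,k)\ge \frac{k^2}{\binom{n+1}{2}}=\frac{2k^2}{n(n+1)}$ by exhibiting an explicit randomized searcher strategy, and then to read off equality for both games: by Theorems \ref{altfelsobecs} and \ref{multiplealpern} we have $v_M(n,2,k)\le v_A(n,2,k)\le \frac{k^2}{\binom{n+1}{2}}$ (here $k$ is an integer), so matching the lower bound sandwiches both values. Write $T=\frac{2k^2}{n(n+1)}$. By linearity of expectation it suffices to guarantee win probability $\ge T$ against every \emph{pure} hiding placement, of which there are two kinds: type A, both treasures in a single box (there are $n$ of these), and type B, the treasures in two distinct boxes $i\ne j$ (there are $\binom n2$).

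For the strategy, I would let the first query $S_1$ be a uniformly random $k$-set (invoking Lemma \ref{kevesebbrenemerimeg} to query exactly $k$ boxes, and the symmetrization convention to name the revealed box $A$). Since $d=2$, after the first removal exactly one treasure remains, in some box $R$, and the searcher wins iff her second query contains $R$. The key design choice is a \emph{no-ask-back} second query (cf.\ Observation \ref{Noaskback}) that never re-queries a box of $S_1\setminus\{A\}$: with probability $q$ she queries $\{A\}$ together with $k-1$ uniformly random previously-unqueried boxes, and with probability $1-q$ she queries $k$ uniformly random previously-unqueried boxes, abandoning $A$. Under this rule each unqueried box ends up in the second query $S_2$ with the same probability $\beta=\frac{k-q}{n-k}$.

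Then I would compute the two win probabilities as functions of $q$. In type A we win iff the single treasure-box lies in $S_1$ and $A$ is re-queried, giving $\frac{k}{n}\,q$. In type B, for a fixed pair $(i,j)$: if both $i,j\in S_1$ the hider has a choice but we lose regardless, because the retained treasure sits in $S_1\setminus\{A\}$, which we never touch again; if exactly one of $i,j$ lies in $S_1$ the reveal is forced and the remaining treasure occupies an unqueried box, which we hit with probability $\beta$. This yields win probability $\frac{2k(n-k)}{n(n-1)}\,\beta=\frac{2k(k-q)}{n(n-1)}$, independent of the hider's revealing policy. Setting $q=\frac{2k}{n+1}$ makes both expressions equal to $T$ exactly, so the searcher guarantees precisely $T$ against every pure hider.

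The main obstacle is conceptual rather than computational: the second query faces opposite demands---it must reach \emph{outside} $S_1$ to catch the distinct-box case, yet must contain $A$ to catch the same-box case---and the plain no-ask-back strategy (always re-query $A$) meets the bound only at the extreme $n=2k-1$. The resolution is to notice that the same-box case carries slack and to spend it by including $A$ only with the tuned probability $q=\frac{2k}{n+1}$. The remaining verifications are routine: $q\in[0,1]$ and $\beta\in[0,1]$ both reduce to the hypothesis $n\ge 2(k-1)+1=2k-1$; at the boundary $n=2k-1$ one gets $q=1$ and $S_2$ exhausts the remaining boxes; and selecting $k-1$ (resp.\ $k$) fresh boxes is possible since $n-k\ge k-1$. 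Finally the sandwich argument promotes the resulting equality $v_M=T$ to $v_A=T$ as well.
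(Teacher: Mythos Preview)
Your proof is correct. The strategy is well-defined, the two case computations are right, and the tuning $q=\tfrac{2k}{n+1}$ indeed equalizes the win probabilities to $T=\tfrac{2k^2}{n(n+1)}$. The feasibility checks also work: for $n=2k-1$ one has $q=1$ so only the $k-1$ fresh-box branch is used, while for $n\ge 2k$ one has $n-k\ge k$ so both branches are available; the hider's revealing policy is irrelevant in every case you consider.

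Your route differs from the paper's. The paper treats only the boundary value $n=2k-1$ explicitly, giving the deterministic ``no-ask-back'' strategy (first query $\{1,\dots,k\}$, second query $\{1,k+1,\dots,2k-1\}$) and citing P\'alv\"olgyi for $n\ge 2k$. Your construction is a single parametrized strategy valid for all $n\ge 2k-1$: you introduce the probability $q$ of re-querying $A$ and tune it to balance the same-box and distinct-box cases. At the boundary $n=2k-1$ your strategy specializes (with $q=1$) exactly to the paper's. What you gain is a self-contained argument that does not appeal to the earlier $n\ge 2k$ result; what the paper's argument gains is simplicity at the single new value it needs to handle.
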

\begin{proof}
Pálvölgyi already solved the case $n\geq 2k$ in \cite{Domotor}, below we give a search strategy for $n=(2k-1)$. See Figure \ref{fig:d=2}. 

\begin{figure}[H]
\begin{center}

\begin{tikzpicture}[line cap=round,line join=round,>=triangle 45,x=0.6cm,y=0.6cm]
\clip(-14.077056627706742,-3.148554262269194) rectangle (0.501305063321407,0.203948110660301);
\draw (-14.009185040850435,-0.7597778487318287) node[font=\small,anchor=north west] {$(\underbrace{1,\ldots,1}_{k \textup{ \;}},\underbrace{0,\ldots, 0}_{k-1 \textup{ \;}})$};
\draw (-6.999409338356793,-0.7597778487318287) node[font=\small,anchor=north west] {$(1,\underbrace{0,\ldots,0}_{k-1 \textup{ \;}},\underbrace{1,\ldots,1}_{k-1 \textup{ \;}})$};
\draw [->,line width=0.8pt,postaction={decorate,decoration={raise=1ex,text along path,text align=center,text={$1.$}}}] (-9.5,-1.2) -- (-7,-1.2);
\end{tikzpicture}
\caption{We put $1$-s on the box positions that the searcher asks in her query and zeros on the other positions. The number on the arrow tells us the box from which she received a treasure.}
\label{fig:d=2}
\end{center}
\end{figure}

There are $2$ cases here, either the treasures are in the same box, or they are in $2$ different boxes.
\begin{itemize}
    \item If the treasures are in the same box, the searcher finds them with probability $\frac kn$, since she only finds them if the box is one of the first $k$ boxes.
    \item If the treasures are in different boxes, the searcher finds them with probability $\frac{k(k-1)}{\binom {n}{2}}$. This is because one treasure needs to be in the first $k$ boxes, the other one needs to be in the last $k-1$ boxes, and all the possible treasure positions are $\binom {n}{2}$.
    \item Since $n=2k-1$, the winning probability in both cases is $\frac{k(k-1)}{\binom{n}{2}}=\frac kn$.
\end{itemize}

\end{proof}

\begin{all}\label{d=3}
If $n \geq d(k-1)+1$ and $d=3$, then the triplet $(n, d, k)$ is accurate.
\end{all}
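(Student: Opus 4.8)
The plan is to establish the matching lower bound. Theorems \ref{altfelsobecs} and \ref{multiplealpern} already give $v_M(n,3,k)\le v_A(n,3,k)\le \frac{k^3}{\binom{n+2}{3}}$, so it suffices to exhibit a searcher strategy in the discrete game that wins with probability at least $\frac{k^3}{\binom{n+2}{3}}$ against every hiding configuration: any such strategy forces all three quantities to coincide, which is exactly accuracy. Since $d=3$, every play consists of exactly three successful queries, so the strategy is a depth-three randomized decision tree, and I would design it to have the no-ask-back property of Observation \ref{Noaskback}, so that the hider's choice of which treasure to reveal never affects the outcome and may be ignored in the analysis.

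The strategy generalizes the $(4,3,2)$ strategy of Subsection \ref{432}. After a uniform random relabeling of the boxes, the first query $Q_1$ is the block $\{1,\dots,k\}$; by the min-index convention the revealed treasure sits in box $1$, and the other $k-1$ boxes are burned. The searcher then flips a biased coin with probability $p=p(n,k)$ to be determined: with probability $p$ she plays the \emph{reuse} branch, keeping box $1$ in $Q_2$ together with $k-1$ fresh boxes, and with probability $1-p$ she plays the \emph{fresh} branch, taking $Q_2$ to be $k$ entirely new boxes. In the reuse branch box $1$ is re-queried, so by no-ask-back exactly one box survives to $Q_3$ (either box $1$ again or the newly revealed box), and $Q_3$ consists of that survivor padded with $k-1$ fresh boxes; in the fresh branch box $1$ is never re-queried and stays alive, so two revealed boxes survive and $Q_3$ consists of both of them padded with $k-2$ fresh boxes. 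Either branch introduces exactly $2k-2$ fresh boxes beyond $Q_1$, so the construction needs precisely $n\ge k+(2k-2)=3(k-1)+1$ boxes; for larger $n$ the surplus boxes are simply left unused, and the only role of $n$ is in the initial random relabeling. Specializing to $n=4,k=2,p=\tfrac45$ recovers the strategy of Subsection \ref{432}.

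The triple-in-one-box configurations pin down the mixing weight cleanly. Under the reuse branch such a configuration is won with probability exactly $k/n$ (the box must land in $Q_1$, after which it remains active and is re-queried twice), whereas under the fresh branch it is lost with certainty, since $Q_2$ then contains no treasure. Hence the mixture wins it with probability $pk/n$, and equating this to $\frac{k^3}{\binom{n+2}{3}}$ forces $p=\frac{6k^2}{(n+1)(n+2)}$, which is readily checked to lie in $[0,1]$ for all $n\ge 3(k-1)+1$. This already explains why a genuine mixture is unavoidable: by the Remark following Conjecture \ref{ujsejtes} we have $\frac{k^3}{\binom{n+2}{3}}\le \frac kn$, so a pure reuse branch over-performs on the triple type and must be diluted.

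The remaining work, and the main obstacle, is to verify that with this value of $p$ the two-in-one-box and three-distinct-box configurations are also won with probability at least $\frac{k^3}{\binom{n+2}{3}}$. For each of these types I would compute the win probability over the searcher's randomness as an explicit rational function of $n$, $k$, and $p$; the no-ask-back property guarantees it is independent of the hider's reveals, so the outcome is determined solely by where the treasure-bearing boxes fall relative to the random query blocks and the coin flip. One expects equality for every configuration, since the uniform hiding distribution attaining the upper bound in Theorem \ref{altfelsobecs} has full support, so by complementary slackness an optimal searcher wins with probability exactly $\frac{k^3}{\binom{n+2}{3}}$ against each configuration. The delicate case is the two-in-one-box type, whose probability depends on which of the three queries catches the doubled box and on whether that box is one of the reused surviving boxes, producing the most sub-cases; I would organize the count by the positions of the doubled box and the single box within the query blocks, reduce each type to closed form, and check that the single parameter $p=\frac{6k^2}{(n+1)(n+2)}$ makes all three configuration-type probabilities meet the common target for every admissible $(n,k)$, including both the boundary case $n=3(k-1)+1$ (where all boxes are used) and all larger $n$ under the same formula.
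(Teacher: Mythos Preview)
Your strategy coincides with the paper's for the boundary case $n=3k-2$: the same two branches (reuse vs.\ fresh), the same no-ask-back structure, and the same mixing weight $p=\frac{nk^2}{\binom{n+2}{3}}=\frac{6k^2}{(n+1)(n+2)}$. For that single value of $n$ your plan would go through exactly as the paper's does.

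The gap is your extension to $n>3k-2$. Leaving the surplus boxes untouched while keeping the same mixing weight does \emph{not} give the target on the all-distinct configuration. Concretely, with three treasures in three different boxes the reuse branch wins with probability $\frac{k(k-1)^2}{\binom{n}{3}}$ and the fresh branch with $\frac{k^2(k-2)}{\binom{n}{3}}$, so the mixed win probability is
\[
\frac{k}{\binom{n}{3}}\bigl(k(k-2)+p\bigr),
\]
and equating this with $\frac{k^3}{\binom{n+2}{3}}$ reduces to the quadratic $n^2-3(k-1)n-(3k-2)=0$, whose positive root is exactly $n=3k-2$. For every larger $n$ the all-distinct type falls strictly below target; e.g.\ at $k=2$, $n=5$ one gets $p=\tfrac{4}{7}$, the fresh branch has $k-2=0$ fresh boxes in $Q_3$ and never wins on this type, and the mixture gives $\tfrac{4}{35}$ against a target of $\tfrac{8}{35}$. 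The complementary-slackness heuristic misleads here precisely because a strategy that ignores some boxes cannot be optimal for larger $n$, so there is no reason to expect the type-probabilities to equalize. The paper avoids this by not claiming the strategy beyond $n=3k-2$: for $n\ge 3k-1$ it simply cites P\'alv\"olgyi's earlier proof, which uses a genuinely different construction touching all $n$ boxes. Your write-up needs that citation (or an independent argument) to cover $n\ge 3k-1$.
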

\begin{proof}
P\'alvölgyi already proved accuracy for $n\geq(3k-1)$ in \cite{Domotor}, below we give a strategy with winning probability $\frac{k^3}{\binom{n+2}{3}}$ for $n=(3k-2)$. See Figure \ref{fig:d=3}.

\begin{figure}[H]
\begin{center}
    
\scalebox{0.75}{
\begin{tikzpicture}[line cap=round,line join=round,>=triangle 45,x=0.6cm,y=0.6cm]
\clip(-14.077056627706742,-5.148554262269194) rectangle (18.501305063321407,2.203948110660301);
\draw (-14.009185040850435,-0.7597778487318287) node[font=\small,anchor=north west] {$(\underbrace{1,\ldots,1}_{k \textup{ \;}},\underbrace{0,\ldots, 0}_{2k-2 \textup{ \;}})$};
\draw (-5.004887851246821,1.2537458946719848) node[font=\small,anchor=north west] {$(1,\underbrace{0,\ldots,0}_{k-1 \textup{ \;}},\underbrace{1,\ldots,1}_{k-1 \textup{ \;}},\underbrace{0,\ldots,0}_{k-1 \textup{ \;}})$};
\draw (-5.004887851246821,-2.750677729850206) node[font=\small,anchor=north west] {$(\underbrace{0,\ldots,0}_{k \textup{ \;}},\underbrace{1,\ldots,1}_{k \textup{ \;}},\underbrace{0,\ldots,0}_{k-2 \textup{ \;}})$};
\draw (4.799409338356793,1.2537458946719848) node[font=\small,anchor=north west] {$(1,\underbrace{0,\ldots,0}_{2k-2 \textup{ \;}},\underbrace{1,\ldots,1}_{k-1 \textup{ \;}})$};
\draw (4.799409338356793,-0.7597778487318287) node[font=\small,anchor=north west] {$(\underbrace{0,\ldots,0}_{k \textup{ \;}},1,\underbrace{0,\ldots,0}_{k-2 \textup{ \;}},\underbrace{1,\ldots,1}_{k-1 \textup{ \;}})$};
\draw (4.7999409338356793,-2.750677729850206) node[font=\small,anchor=north west] {$(1,\underbrace{0,\ldots,0}_{k-1 \textup{ \;}},1,\underbrace{0,\ldots,0}_{k-1 \textup{ \;}},\underbrace{1,\ldots,1}_{k-2 \textup{ \;}})$};
\draw [->,line width=0.8pt,postaction={decorate,decoration={raise=1ex,text along path,text align=center,text={$1.$}}}] (2.3,0.8) -- (4.8,0.8);
\draw [->,line width=0.8pt,postaction={decorate,decoration={raise=1ex,text along path,text align=center,text={$(k+1).$}}}] (2.3,0.8) -- (4.8,-1.15);
\draw [->,line width=0.8pt,postaction={decorate,decoration={raise=1ex,text along path,text align=center,text={$(k+1).$}}}] (2.3,-3.2) -- (4.8,-3.2);
\draw [line width=0.8pt] (-7,0.8)-- (-5,0.8);
\draw [line width=0.8pt] (-5,-3.2)-- (-7,-3.2);
\draw [->,line width=0.8pt,postaction={decorate,decoration={raise=1ex,text along path,text align=center,text={$1.$}}}] (-9.5,-1.2) -- (-7,-1.2);
\begin{scriptsize}
\draw[color=black] (-5.814392402105076,1.5431038289535264) node {$\frac{nk^2}{\binom{n+2}{3}}$};
\draw[color=black] (-5.874566818387592,-2.4839436578541007) node {$1-\frac{nk^2}{\binom{n+2}{3}}$};
\end{scriptsize}
\end{tikzpicture}
}
\caption{On the diagram, the positions of the $1$-s show which boxes were part of the given query. The numbers on the arrows tell us the position of the box from which the searcher received the last treasure. Where the diagram splits into several rows, we write the probability of each query occurring on a segment before the query.}
\label{fig:d=3}
\end{center}
\end{figure}

Here the treasures can be positioned in the boxes in three different ways. They can all be in the same box, they can all be in separate boxes, or two can be together and the third one in a different box.
\begin{itemize}
    \item If all three treasures are in the same box $B$, the searcher finds them with probability $\frac kn\cdot\frac{nk^2}{\binom{n+2}{3}}=\frac{k^3}{\binom{n+2}{3}}$: Box $B$ has to be among the first $k$ boxes ($\frac kn$), and in the second move she must query $B$ again, which has probability $\left(\frac{nk^2}{\binom{n+2}{3}}\right)$. The final step is deterministic, so she has no more error possibility there.
    \item If $2$ treasures are in the same box, and the last one is in a different one, then there are $(3k-2)(3k-3)$ ways the treasures can be in the boxes.
    Now we count the ways the searcher can find the treasures following our strategy. The probability of each case occurring with the searcher winning is shown in parentheses at the end of its description.
  \\
    On the upper branch, she can find all treasures in the following ways:
    \begin{itemize}
        \item There are $2$ treasures in one of the first $k$ boxes and $1$ treasure in one of the last $k-1$ boxes. $\left(\frac{nk^2}{\binom{n+2}{3}}\cdot \frac{k(k-1)}{(3k-2)(3k-3)}\right)$
        \item  There is $1$ treasure in one of the first $k$ boxes, and the $2$ treasures in one of the boxes between the $k+1$-st and the $2k-1$-st. $\left(\frac{nk^2}{\binom{n+2}{3}}\cdot \frac{k(k-1)}{(3k-2)(3k-3)}\right)$
    \end{itemize}
    On the lower branch, the searcher can win in one of the following ways:
   \begin{itemize}
        \item There are $2$ treasures in one of the first $k$ boxes and $1$ treasure in one of the next $k$ boxes. $\left(\left(1-\frac{nk^2}{\binom{n+2}{3}}\right)\cdot \frac{k^2}{(3k-2)(3k-3)}\right)$
        \item  There is $1$ treasure in one of the first $k$ boxes, and the $2$ treasures in one of the next $k$ boxes. $\left(\left((1-\frac{nk^2}{\binom{n+2}{3}}\right)\cdot \frac{k^2}{(3k-2)(3k-3)}\right)$
    \end{itemize}
    Adding these up and using $n=3k-2$, we get that the winning probability of the searcher is:\\
    $$\frac1{(3k-2)(3k-3)}\cdot\left(\frac{nk^2}{\binom{n+2}{3}}\cdot2k(k-1)+\left(1-\frac{nk^2}{\binom{n+2}{3}}\right)\cdot2k^2\right)=\frac{k^3}{\binom{n+2}{3}}$$
    \item If all three treasures are in different boxes, then the number of ways the hider could distribute them is $\binom n3$.
    
    On the upper branch, the searcher can find the treasures if one treasure is hidden in one of the first $k$ boxes, another one is hidden in the next $k-1$ boxes, and one is hidden in the last $k-1$ boxes. $\left(\frac{nk^2}{\binom{n+2}{3}}\cdot \frac{k(k-1)^2}{\binom n3}\right)$

    On the lower branch, the searcher can find them, if there is a treasure in the first $k$ boxes, a treasure in the next $k$ boxes, and one in the last $k-2$ boxes. $\left(\left(1-\frac{nk^2}{\binom{n+2}{3}}\right)\cdot \frac{k^2(k-2)}{\binom n3}\right)$

     Adding these up and using $n=3k-2$, we get that the winning probability of the searcher is:\\
     $$\frac{1}{\binom{n}{3}}\cdot\left(\frac{nk^2}{\binom{n+2}{3}}\cdot k(k-1)^2+\left(1-\frac{nk^2}{\binom{n+2}{3}}\right)\cdot k^2(k-2)\right)=\frac{k^3}{\binom{n+2}{3}}$$
\end{itemize}

Hence we proved that the value of the game is $\frac{k^3}{\binom{n+2}{3}}$.
\end{proof}
We verified our conjecture for a few more cases:
$(5,4,2),(9,4,3),(13,4,4),(6,5,2)$. We include the strategy for the $(5,4,2)$ game, see Figure \ref{fig:(5,4,2)}. We leave it to the reader to check its correctness.

\begin{figure}[H]
\begin{center}
\begin{tikzpicture}[line cap=round,line join=round,>=triangle 45,x=0.6cm,y=0.6cm]
\clip(-0.1,-8) rectangle (30,6);
\draw (0,0) node[font=\small,anchor=north west] {$(1,1,0,0,0)$};
\draw (6,2) node[font=\small,anchor=north west] {$(1,0,1,0,0)$};
\draw (6,-4) node[font=\small,anchor=north west] {$(0,0,1,1,0)$};
\draw (12,4) node[font=\small,anchor=north west] {$(1,0,0,1,0)$};
\draw (12,2) node[font=\small,anchor=north west] {$(0,0,1,1,0)$};
\draw (12,-2) node[font=\small,anchor=north west] {$(0,0,1,0,1)$};
\draw (12,-4) node[font=\small,anchor=north west] {$(1,0,0,0,1)$};
\draw (12,-6) node[font=\small,anchor=north west] {$(1,0,1,0,0)$};

\draw (18,5) node[font=\small,anchor=north west] {$(1,0,0,0,1)$};
\draw (18,4) node[font=\small,anchor=north west] {$(0,0,0,1,1)$};
\draw (18,3) node[font=\small,anchor=north west] {$(0,0,1,0,1)$};
\draw (18,2) node[font=\small,anchor=north west] {$(0,0,0,1,1)$};
\draw (18,-1) node[font=\small,anchor=north west] {$(1,0,1,0,0)$};
\draw (18,-2) node[font=\small,anchor=north west] {$(1,0,0,0,1)$};
\draw (18,-3) node[font=\small,anchor=north west] {$(1,0,1,0,0)$};
\draw (18,-4) node[font=\small,anchor=north west] {$(0,0,1,0,1)$};
\draw (18,-5) node[font=\small,anchor=north west] {$(1,0,0,0,1)$};
\draw (18,-6) node[font=\small,anchor=north west] {$(0,0,1,0,1)$};

\draw [->,line width=0.8pt,postaction={decorate,decoration={raise=1ex,text along path,text align=center,text={$1.$}}}] (3,-0.5) -- (4.5,-0.5);
\draw [->,line width=0.8pt,postaction={decorate,decoration={raise=1ex,text along path,text align=center,text={$1.$}}}] (9,1.5) -- (12,3.5);
\draw [->,line width=0.8pt,postaction={decorate,decoration={raise=1ex,text along path,text align=center,text={$3.$}}}] (9,1.5) -- (12,1.5);
\draw [->,line width=0.8pt,postaction={decorate,decoration={raise=1ex,text along path,text align=center,text={$3.$}}}] (9,-4.5) -- (10.5,-4.5);

\draw [->,line width=0.8pt,postaction={decorate,decoration={raise=1ex,text along path,text align=right,text={$1.$}}}] (15,3.5) -- (18,4.5);
\draw [->,line width=0.8pt,postaction={decorate,decoration={raise=1ex,text along path,text align=right,text={$4.$}}}] (15,3.5) -- (18,3.5);
\draw [->,line width=0.8pt,postaction={decorate,decoration={raise=1ex,text along path,text align=right,text={$3.$}}}] (15,1.5) -- (18,2.5);
\draw [->,line width=0.8pt,postaction={decorate,decoration={raise=1ex,text along path,text align=right,text={$4.$}}}] (15,1.5) -- (18,1.5);
\draw [->,line width=0.8pt,postaction={decorate,decoration={raise=1ex,text along path,text align=right,text={$3.$}}}] (15,-2.5) -- (18,-1.5);
\draw [->,line width=0.8pt,postaction={decorate,decoration={raise=1ex,text along path,text align=right,text={$5.$}}}] (15,-2.5) -- (18,-2.5);
\draw [->,line width=0.8pt,postaction={decorate,decoration={raise=1ex,text along path,text align=right,text={$1.$}}}] (15,-4.5) -- (18,-3.5);
\draw [->,line width=0.8pt,postaction={decorate,decoration={raise=1ex,text along path,text align=right,text={$5.$}}}] (15,-4.5) -- (18,-4.5);
\draw [->,line width=0.8pt,postaction={decorate,decoration={raise=1ex,text along path,text align=right,text={$1.$}}}] (15,-6.5) -- (18,-5.5);
\draw [->,line width=0.8pt,postaction={decorate,decoration={raise=1ex,text along path,text align=right,text={$3.$}}}] (15,-6.5) -- (18,-6.5);

\draw [line width=0.8pt] (4.7,1.5) -- (6,1.5);
\draw [line width=0.8pt] (4.7,-4.5) -- (6,-4.5);
\draw [line width=0.8pt] (10.7,-2.5) -- (12,-2.5);
\draw [line width=0.8pt] (10.7,-4.5) -- (12,-4.5);
\draw [line width=0.8pt] (10.7,-6.5) -- (12,-6.5);

\begin{scriptsize}
\draw[color=black] (5.35,1.9) node {$\frac47$};
\draw[color=black] (5.35,-4.1) node {$\frac37$};
\draw[color=black] (11.35,-2.1) node {$\frac13$};
\draw[color=black] (11.35,-4.1) node {$\frac13$};
\draw[color=black] (11.35,-6.1) node {$\frac13$};

\end{scriptsize}
\end{tikzpicture}

\caption{On the diagram, the positions of the $1$-s show which boxes were part of the given query. The numbers on the arrows tell us the position of the box from which the searcher received the last treasure. Where the diagram splits into several rows, we write the probability of each query occurring on a segment before the query.}
\label{fig:(5,4,2)}
\end{center}
\end{figure}

\subsection{Adversary, Cooperative, and Random revealer} \label{rosszrandomjo}
After arguing strategies in the last section, one could be curious to see if it is significant, whether the hider can choose which treasure he gives reveals and removes. This motivates the definition of the following two new versions of the Multiple Caching Game:

The game itself does not change in any of the cases, the only thing that changes is how the hider chooses the treasure to give away when multiple treasures are found in a single question.

\begin{mydef}
If the searcher guesses more treasures in a single question, the \textbf{Adversary revealer} chooses one to reveal, following the hider's strategy to win the game.
This is the original version of the Discrete Caching Game.

We still denote the value of this game by $v_M(n,d,k)$.
\end{mydef}

\begin{mydef}
If the searcher guesses more treasures in a single question, the \textbf{Random revealer} reveals one of them with equal probability. 

We denote the value of this game by $v_{Mr}(n,d,k)$.
\end{mydef}

\begin{mydef}
If the searcher guesses more treasures in a single question, the \textbf{Cooperative revealer} reveals a treasure, following a strategy discussed with the searcher in advance.

We denote the value of this game by $v_{Mc}(n,d,k)$.
\end{mydef}

\begin{all}
For any integer triplet $(n,d,k)$,  $$v_M(n,d,k)\leq v_{Mr}(n,d,k)\leq v_{Mc}(n,d,k)\leq\frac{k^d}{\binom{n+d-1}{d}}.$$
\end{all}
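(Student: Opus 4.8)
The plan is to treat the revealer in all three variants as a \emph{policy} that, at each step where the searcher's query meets one or more treasures, selects one of the found treasures to reveal and remove. The single observation driving the whole argument is that the only information such a choice transmits to the searcher is the identity of the box the revealed treasure came from, which is one of at most $k$ queried boxes, \emph{independently of whether the revealer is adversarial, random, or cooperative}. Writing $w(S,H,R)$ for the searcher's win probability given a searcher strategy $S$, an initial hiding distribution $H$, and a revealer policy $R$, the two leftmost inequalities follow because uniform-random revealing is itself one legal policy, and the rightmost inequality follows from the counting already used in Theorem \ref{altfelsobecs}.

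For $v_M\le v_{Mr}$ I would fix an optimal searcher strategy $S^*$ in the adversary game, so that $w(S^*,H,R)\ge v_M$ for \emph{every} $H$ and every deterministic policy $R$ (this is exactly what it means for $S^*$ to attain $v_M=\max_S\min_{H,R}w(S,H,R)$). Playing $S^*$ in the random-revealer game against any $H$ yields win probability equal to the average of $w(S^*,H,R_\omega)$ over the deterministic policies $R_\omega$ induced by the revealer's coin flips; since each term is $\ge v_M$, so is the average. Hence $S^*$ guarantees $\ge v_M$ in the random game, i.e. $v_{Mr}\ge v_M$. For $v_{Mr}\le v_{Mc}$, the cooperative revealer is free to ignore its agreed plan and simply reveal uniformly at random; this reproduces the random-revealer game exactly, so the cooperating pair can guarantee at least $v_{Mr}$, giving $v_{Mc}\ge v_{Mr}$.

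For the final inequality I would reuse the proof of Theorem \ref{altfelsobecs} verbatim, now applied to a combined searcher-plus-revealer strategy. That proof never used adversariality of the revealer; it used only that in each step the answer is one of at most $k$ boxes, which still holds when the revealer is cooperative and knows $H$. Thus, fixing any deterministic searcher-plus-revealer strategy, there are at most $k^{d-1}$ distinct execution branches (at most $k$ answers in each of the first $d-1$ reveals), and each branch is won by at most $k$ distributions, since the first $d-1$ treasure locations are pinned by the answers and the last must lie in the final query of $\le k$ boxes. Hence at most $k^d$ of the $\binom{n+d-1}{d}$ distributions are winning, and averaging over a uniformly random $H$ bounds the win probability by $\frac{k^d}{\binom{n+d-1}{d}}$; randomization inside $(S,R)$ is absorbed by conditioning on the internal coins and using linearity of expectation. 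Since $v_{Mc}=\max_{S,R}\min_H w \le \max_{S,R}(\text{average over uniform }H)$, this yields $v_{Mc}\le\frac{k^d}{\binom{n+d-1}{d}}$.

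The step I expect to be the main obstacle is this last inequality — not the counting, which is routine, but the justification that a cooperative, fully informed revealer genuinely cannot enlarge the per-step answer alphabet beyond the $\le k$ queried boxes. One must argue that its single legal action cannot smuggle in extra information (for instance, how many treasures sit in the revealed box, which the searcher only learns by re-querying it), so that the bound of $k^{d-1}$ execution branches survives the revealer's added power. Once this is granted, the remaining bookkeeping is identical to that of Theorem \ref{altfelsobecs}.
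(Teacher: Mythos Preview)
Your proof is correct and follows the same approach as the paper: the paper also handles the three inequalities by (i) noting that the adversary revealer may adopt the uniform-random policy, (ii) noting that the cooperative revealer may do the same, and (iii) reusing the counting argument of Theorem~\ref{altfelsobecs}, observing that it never invoked the revealer's nature. The concern you flag as the main obstacle is not one: by the rules of the game the revealer's only observable action is naming one of the at most $k$ queried boxes, so the per-step answer alphabet is bounded by $k$ regardless of the revealer's knowledge or intent, and the $k^{d-1}$ branch count carries over verbatim.
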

\begin{proof} We prove the three inequalities in order from right to left.

\begin{itemize}
    \item In the proof of Theorem \ref{altfelsobecs}, we only used the hiding strategy and never mentioned how the hider chooses which treasure to reveal. Therefore, the upper bound holds for all defined versions of the game, even with the cooperative revealer.
    \item $v_{Mr}(n,d,k)\leq v_{Mc}(n,d,k)$, since the cooperative revealer can apply the random treasure-giving strategy, which shows that she can get the same winning probability as in the random revealer case. 
    \item $v_M(n,d,k)\leq v_{Mr}(n,d,k)$, since the adversary revealer can choose to follow the random revealer strategy, which gives him a maximal winning chance at least as large as the random revealer has. Therefore, the value of the random revealer game is at least as large as the adversary revealer game. 

\end{itemize}
\end{proof}

Note that this theorem implies that for accurate $(n,d,k)$ triplets, $v_M(n,d,k) = v_{Mr}(n,d,k) = v_{Mc}(n,d,k)\leq\frac{k^d}{\binom{n+d-1}{d}}$, so for a triplet exhibiting a gap between the values of the three variants, we need to find a non-accurate triplet.

Such a case is $(n,d,k)=(3,3,2)$, which we discuss below, proving Theorem \ref{altrevealer}.

The optimal hiding (and revealing) strategies for the adversary and random revealer games were found using a computer program.  If we make choice variables for every single choice the hider, the searcher or the revealer has during the game (representing the probabilities of their answers in the current state of the game), then we can express the game's value using these variables; an optimal hiding strategy is for which the maximum of this expression among the possible search strategies is as low as possible. For the random and adversary revealer cases, we give a full proof of this upper bound in Appendix \ref{Appendix}.  The upper bound of the cooperative revealer case follows directly from Theorem \ref{k/n}.

We got the following results:
\begin{equation*}
    v_{Mc}(3,3,2)= \frac{2}{3}, \; \;
 v_{Mr}(n,d,k)= \frac{12}{19}, \; \;
v_{M}(n,d,k)= \frac{3}{5}.
\end{equation*}

Below we present searching strategies for all three cases:

Figure \ref{fig:kind} shows the optimal strategy of the searcher in the cooperative hider game. Here we have shown that $v_{Mc}(3,3,2)=\frac23$.

\begin{figure}[H]
\begin{center}
\begin{tikzpicture}[line cap=round,line join=round,>=triangle 45,x=0.6cm,y=0.6cm]
\clip(-0.1,-2) rectangle (15,2);
\draw (0,0) node[font=\small,anchor=north west] {$(1,1,0)$};
\draw (5,0) node[font=\small,anchor=north west] {$(1,1,0)$};
\draw (10,1) node[font=\small,anchor=north west] {$(1,0,1)$};
\draw (10,-1) node[font=\small,anchor=north west] {$(0,1,1)$};

\draw [->,line width=0.8pt,postaction={decorate,decoration={raise=1ex,text along path,text align=center,text={$1.$}}}] (2,-0.5) -- (5,-0.5);
\draw [->,line width=0.8pt,postaction={decorate,decoration={raise=1ex,text along path,text align=center,text={$1.$}}}] (7,-0.5) -- (10,0.5);
\draw [->,line width=0.8pt,postaction={decorate,decoration={raise=1ex,text along path,text align=center,text={$2.$}}}] (7,-0.5) -- (10,-1.5);

\end{tikzpicture}

\caption{On the diagram, the positions of the $1$-s show which boxes were part of the given query. The numbers on the arrows tell us the position of the box from which the searcher received the last treasure.}
\label{fig:kind}
\end{center}
\end{figure}

The searcher has $\frac 23$ winning probability if she applies that strategy. Since the revealer is cooperative, the searcher tells him to give a treasure from the box which has the least amount of treasures, if both queried boxes have treasures in them.

Keeping this in mind, we show that this is indeed an optimal strategy for the searcher:
\begin{itemize}
    \item If the hider places all the treasures in the same box, then the searcher has $\frac23$ probability of asking it in the first question, and in this case she queries this box two more times, finding all treasures. This gives $\frac23$ winning chance.
    \item If the hider puts two treasures in the same box, and the last one in another, there are three cases:
    \begin{itemize}
    \item With $\frac 13$ chance, one of the first two boxes contains $1$ treasure, and the other one contains $2$ treasures. By the cooperative revealer's strategy, the searcher gets the sole treasure from the first box. Then it is easy to see that she will ask the second box $2$ more times.
    \item With $\frac 13$ chance, box one has $2$ treasures and box three has the last treasure. In this case, the searcher surely wins again.
    \item With $\frac 13$ chance, box one has $1$ treasures and box three has the other $2$ treasures. In this case, the searcher never wins.
    \end{itemize}
    Altogether, we can see that the searcher wins with $\frac 23$ probability in this case too.
    \item If the hider hides all the treasures in different boxes, then following the diagram one can see that the searcher will always find all $3$ treasures. So she surely wins in that case.
\end{itemize}
Therefore we can see that this strategy has $\frac 23$ winning probability even in the worst case. It is an optimal strategy since the upper bound of Theorem \ref{k/n} holds.

In the random revealer case we found the following strategy which exhibits a winning probability of $\frac{12}{19} \approx 0.631579$ for all three hiding strategies:

\begin{figure}[H]\label{Random332abra}
\begin{center}
\begin{tikzpicture}[line cap=round,line join=round,>=triangle 45,x=0.6cm,y=0.6cm]
\draw (0,1) node[font=\small,anchor=north west] {$(1,1,0)$};

\draw (8,3) node[font=\small,anchor=north west] {$(1,0,1)$};
\draw (8,-1) node[font=\small,anchor=north west] {$(0,1,1)$};

\draw (16,4) node[font=\small,anchor=north west] {$(1,1,0)$};

\draw (16,2) node[font=\small,anchor=north west] {$(1,0,1)$};
\draw (16,1) node[font=\small,anchor=north west] {$(0,1,1)$};

\draw (16,-0.5) node[font=\small,anchor=north west] {$(1,1,0)$};

\draw (16,-1.5) node[font=\small,anchor=north west] {$(1,0,1)$};

\draw [->,line width=0.8pt,postaction={decorate,decoration={raise=1ex,text along path,text align=center,text={$1.$}}}] (2,0.5) -- (5,0.5);

\draw [->,line width=0.8pt,postaction={decorate,decoration={raise=1ex,text along path,text align=center,text={$1.$}}}] (10,2.5) -- (13,3.5);
\draw [->,line width=0.8pt,postaction={decorate,decoration={raise=1ex,text along path,text align=center,text={$2.$}}}] (10,2.5) -- (13,1);

\draw [->,line width=0.8pt,postaction={decorate,decoration={raise=1ex,text along path,text align=center,text={$2.$}}}] (10,-1.5) -- (13,-1);
\draw [->,line width=0.8pt,postaction={decorate,decoration={raise=1ex,text along path,text align=right,text={$3.$}}}] (10,-1.5) -- (13,-2);

\draw [line width=0.8pt] (5.5,2.5) to[out=0, in=180] node[pos=0.5, above=0.5ex] {$\frac{18}{19}$} (7.5,2.5);
\draw [line width=0.8pt] (5.5,-1.5) to[out=0, in=180] node[pos=0.5, above=0.5ex] {$\frac{1}{19}$}  (7.5,-1.5);

\draw [line width=0.8pt] (13.5,1.5) to[out=0, in=180] node[pos=0.5, above=-0.4ex] {$\frac{1}{3}$} (15.5,1.5);
\draw [line width=0.8pt] (13.5,0.5) to[out=0, in=180] node[pos=0.5, above=-0.4ex] {$\frac{2}{3}$} (15.5,0.5);


\end{tikzpicture}

\caption{Search strategy when the revealer is randomized}
\end{center}
\end{figure}

In the following search strategy for the adversary revealer case, the choice of the revealed treasure is never meaningful, so it is sufficient to check that the probability of winning is $\frac{6}{10}$ for all $3$ possible hiding allocations.
\begin{figure}[H]
\begin{center}
\begin{tikzpicture}[line cap=round,line join=round,>=triangle 45,x=0.6cm,y=0.6cm]
\draw (0,0) node[font=\small,anchor=north west] {$(1,1,0)$};

\draw (8,4) node[font=\small,anchor=north west] {$(1,1,0)$};
\draw (8,0) node[font=\small,anchor=north west] {$(1,0,1)$};
\draw (8,-4) node[font=\small,anchor=north west] {$(0,1,1)$};

\draw (16,4.5) node[font=\small,anchor=north west] {$(1,0,1)$};

\draw (16,3.5) node[font=\small,anchor=north west] {$(0,1,1)$};

\draw (16,2) node[font=\small,anchor=north west] {$(1,1,0)$};
\draw (16,1) node[font=\small,anchor=north west] {$(1,0,1)$};

\draw (16,-1) node[font=\small,anchor=north west] {$(1,0,1)$};
\draw (16,-2) node[font=\small,anchor=north west] {$(0,1,1)$};

\draw (16,-3.5) node[font=\small,anchor=north west] {$(1,1,0)$};

\draw (16,-4.5) node[font=\small,anchor=north west] {$(1,0,1)$};

\draw [->,line width=0.8pt,postaction={decorate,decoration={raise=1ex,text along path,text align=center,text={$1.$}}}] (2,-0.5) -- (5,-0.5);

\draw [->,line width=0.8pt,postaction={decorate,decoration={raise=1ex,text along path,text align=center,text={$1.$}}}] (10,3.5) -- (13,4);
\draw [->,line width=0.8pt,postaction={decorate,decoration={raise=1ex,text along path,text align=right,text={$2.$}}}] (10,3.5) -- (13,3);

\draw [->,line width=0.8pt,postaction={decorate,decoration={raise=1ex,text along path,text align=center,text={$1.$}}}] (10,-0.5) -- (13,1);
\draw [->,line width=0.8pt,postaction={decorate,decoration={raise=1ex,text along path,text align=center,text={$3.$}}}] (10,-0.5) -- (13,-2);

\draw [->,line width=0.8pt,postaction={decorate,decoration={raise=1ex,text along path,text align=center,text={$2.$}}}] (10,-4.5) -- (13,-4);
\draw [->,line width=0.8pt,postaction={decorate,decoration={raise=1ex,text along path,text align=right,text={$3.$}}}] (10,-4.5) -- (13,-5);

\draw [line width=0.8pt] (5.5,3.5) to[out=0, in=180] node[pos=0.5, above=0.5ex] {$\frac{3}{10}$} (7.5,3.5);
\draw [line width=0.8pt] (5.5,-0.5) to[out=0, in=180] node[pos=0.5, above=0.5ex] {$\frac{6}{10}$} (7.5,-0.5);
\draw [line width=0.8pt] (5.5,-4.5) to[out=0, in=180] node[pos=0.5, above=0.5ex] {$\frac{1}{10}$}  (7.5,-4.5);

\draw [line width=0.8pt] (13.5,1.5) to[out=0, in=180] node[pos=0.5, above=-0.4ex] {$\frac{1}{2}$} (15.5,1.5);
\draw [line width=0.8pt] (13.5,0.5) to[out=0, in=180] node[pos=0.5, above=-0.4ex] {$\frac{1}{2}$} (15.5,0.5);

\draw [line width=0.8pt] (13.5,-1.5) to[out=0, in=180] node[pos=0.5, above=-0.4ex] {$\frac{1}{2}$}(15.5,-1.5);
\draw [line width=0.8pt] (13.5,-2.5) to[out=0, in=180] node[pos=0.5, above=-0.4ex] {$\frac{1}{2}$} (15.5,-2.5);

\end{tikzpicture}

\caption{Search strategy when the revealer is an adversary}
\end{center}
\end{figure}

\subsection{Lower bound for $v_M(n,d,k)$ as $d \to \infty$}
We proved that $v_M(n,d,k)$ is monotonically decreasing with respect to $d$. This raises the question whether the value of the game tends to $0$ as $d\to\infty$. We show that the value does not converge to $0$ if $k \geq 2$, so the searcher can find arbitrarily many treasures with a fixed positive probability. 

\vegtelend*
\begin{proof}
Fix $n$ and $k \geq 2$. We define a strategy that finds all the treasures with a probability independent of $d$: In the first step, the searcher queries $k$ arbitrarily chosen boxes. In all subsequent steps, she queries the box she received the treasure from in the previous step and asks $k-1$ more boxes chosen uniformly randomly out of all the other boxes. In the first step, she loses with probability at most $\frac{n-k}n$ since there must be at least one box containing a treasure. In a general step, the searcher can only lose if the last treasure she received was the last in its box, therefore during the game, there are at most $n$ steps in which she loses with positive probability. If before such a step there were $i-1$ empty boxes, and afterwards there are $i$ empty boxes, then in the next step the searcher loses with probability $\frac{\binom{i-1}{k-1}}{\binom{n-1}{k-1}}$, since she guesses $k-1$ random boxes out of all the remaining ones. Therefore the winning probability of the searcher with this strategy is at least \\ $c(n,k)=\left(1-\frac{n-k}n \right)\cdot\prod\limits_{i=k}^{n-1}\left( 1-\frac{\binom{i-1}{k-1}}{\binom{n-1}{k-1}} \right)>0$.
\end{proof}
\begin{kov}
For any $k \geq 2$, $v_A(n,d,k) \geq c(n,\lfloor k \rfloor)>0$, where $c(n,\lfloor k \rfloor)$ is a positive constant only depending on $n$ and $\lfloor k \rfloor$.
\end{kov}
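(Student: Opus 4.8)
The plan is to reduce the claim to the integer-budget case already established in Theorem \ref{vegtelend}, by interpolating through the value at the integer budget $\lfloor k \rfloor$. Since $k \geq 2$, its floor $\lfloor k \rfloor$ is an integer with $\lfloor k \rfloor \geq 2$, so both Theorem \ref{multiplealpern} and Theorem \ref{vegtelend} apply to the budget $\lfloor k \rfloor$. The proof then consists of chaining together two comparisons with the single positive lower bound supplied by Theorem \ref{vegtelend}.

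First I would record the monotonicity of Alpern's game value in the digging budget: for $k \geq \lfloor k \rfloor$ we have $v_A(n,d,k) \geq v_A(n,d,\lfloor k \rfloor)$. This is a simulation argument. A searcher endowed with budget $k$ may simply execute any search strategy designed for budget $\lfloor k \rfloor$; the digging budget only imposes an upper bound on the total depth dug before all treasures must be found, and the event ``all treasures found'' is upward-closed in the budget. Concretely, whenever the simulated strategy finds every treasure after digging a total depth of at most $\lfloor k \rfloor$, the same execution finds every treasure after digging at most $\lfloor k \rfloor \leq k$, so it wins in the larger-budget game as well. Hence the searcher's winning probability cannot decrease, yielding the stated inequality.

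With this in hand I would combine the three facts directly:
\begin{equation*}
v_A(n,d,k) \;\geq\; v_A(n,d,\lfloor k \rfloor) \;\geq\; v_M(n,d,\lfloor k \rfloor) \;\geq\; c(n,\lfloor k \rfloor) \;>\; 0,
\end{equation*}
where the first inequality is the monotonicity just argued, the second is Theorem \ref{multiplealpern} applied at the integer budget $\lfloor k \rfloor$, and the third is Theorem \ref{vegtelend}, whose constant $c(n,\lfloor k \rfloor)$ depends only on $n$ and $\lfloor k \rfloor$ and is independent of $d$. This establishes the corollary.

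The only step requiring any care is the budget-monotonicity of $v_A$, since it is the one ingredient not already stated in the excerpt; everything else is an immediate invocation of earlier results. I do not expect it to be a genuine obstacle, as the argument is a one-line domination/simulation: extra digging budget can only help, because the losing condition is ``the budget is exhausted before all treasures are found,'' which can only become easier to avoid as the budget grows.
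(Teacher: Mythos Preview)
Your proposal is correct and follows essentially the same route as the paper: reduce to the integer budget $\lfloor k\rfloor$ via monotonicity of $v_A$ in $k$, then chain Theorem~\ref{multiplealpern} and Theorem~\ref{vegtelend}. The paper's proof is terser---it simply says the monotonicity holds ``using the game's definition'' rather than spelling out the simulation argument---but the logic is identical.
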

\begin{proof}
Using Theorem \ref{multiplealpern}.\, for any integer $k$, $v_A(n,d,k) \ge c(n,k)$. For non-integer $k$ values, $v_A(n,d,k) \ge v_A(n,d,\lfloor k \rfloor)\ge c(n,\lfloor k \rfloor)$ using the game's definition.
\end{proof}

\section{Continuous versions}

Alpern's Caching Game can be regarded as a continuous version of the Multiple Caching Game, this motivates the definition of other such games, replacing a given parameter of the game with a continuous variable. 
\subsection{Discrete game for real $k$ values} \label{nemegeszk}
We can define the multiple caching game for non-integer values of $k$. The searcher is allowed to use a probabilistic strategy, giving a probability for each possible query, such that the expected number of chosen boxes in each step is $k$. 

In this subsection, we assume $k$ to be any positive real number.

\begin{all}
    The upper bound $\frac{k^d}{\binom{n+d-1}{d}}$ for the value of the Discrete Caching Game remains true for arbitrary positive real $k$ values.
\end{all}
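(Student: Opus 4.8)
The plan is to keep the hider's strategy from the proof of Theorem \ref{altfelsobecs} — hide the $d$ treasures according to a uniformly random distribution among all $\binom{n+d-1}{d}$ possibilities — and to re-run that counting argument, but now weighting each branch of the searcher's (necessarily randomized) play by its probability. The winning probability against the uniform hider equals $\frac{1}{\binom{n+d-1}{d}}\,\mathbb{E}\big[\,\#\{D : \text{searcher wins against } D\}\,\big]$, where the expectation is over the searcher's coins, so it suffices to show that this expected number of winning distributions is at most $k^d$.

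First I would set up the count on the tree of \emph{revealed positions} rather than on the searcher's queries. For a fixed realization $\omega$ of the searcher's randomness she plays a pure strategy with queries $Q_1,Q_2,\dots$ of (now possibly varying) sizes; I associate to it the rooted tree whose depth-$i$ nodes are the sequences $(c_1,\dots,c_i)$ with $c_j\in Q_j$, where $Q_j$ is the query dictated by $(c_1,\dots,c_{j-1})$. Every distribution $D$ that the searcher wins yields, via the order in which its treasures are revealed, a distinct depth-$d$ leaf, so the number of distributions won by the pure strategy is at most the number of depth-$d$ leaves of this tree. This is exactly the $k^{d-1}\cdot k$ bookkeeping of Theorem \ref{altfelsobecs}, except that a node now has $|Q_j|$ children instead of $k$.

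Next I would bound the expected number of leaves by a one-line induction using the size constraint. Let $U(v)$ denote the expected number of depth-$d$ descendants of a node $v$ at depth $j$, the expectation taken over the searcher's coins at $v$ and below. A node at depth $d$ has $U=1$, and at an internal node the searcher draws a query $Q$ with $\mathbb{E}[\,|Q|\mid v\,]=k$ and then branches to $U(v\cdot c)$ for each $c\in Q$, so
\[
U(v)=\mathbb{E}_{Q}\Big[\sum_{c\in Q}U(v\cdot c)\Big]\le \mathbb{E}_{Q}\big[\,|Q|\,\big]\cdot k^{\,d-j-1}=k^{\,d-j},
\]
where the inequality is the inductive hypothesis $U(v\cdot c)\le k^{\,d-j-1}$. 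Taking $j=0$ gives $U(\text{root})\le k^d$, i.e. $\mathbb{E}\big[\#\{D \text{ won}\}\big]\le k^d$, which is the desired bound; note that it uses only $\mathbb{E}[|Q|]=k$ per step and never refers to which treasure the hider reveals, so it holds for every revealing rule.

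The main thing to get right — and the only place the real-$k$ setting differs from the integer proof — is that the query size is random and correlated with which node is reached, so one cannot simply multiply $k$ across the levels of a fixed tree. Phrasing the count as an expectation over the searcher's coins and pushing the per-step identity $\mathbb{E}[|Q|]=k$ through linearity of expectation at each node resolves this; branching into \emph{all} $c\in Q$ (rather than following the hider's actual revelation) is an overcount that keeps the recursion clean while only increasing the count, so the inequality direction is preserved. As a fallback, if ``expected number of chosen boxes in each step is $k$'' were only imposed marginally, I would instead use multilinearity of the value in the searcher's per-state query distributions to reduce to the vertices of the polytope $\{p : \sum_Q p_Q=1,\ \sum_Q |Q|\,p_Q=k\}$, which are supported on sizes $\lfloor k\rfloor$ and $\lceil k\rceil$; but the direct recursion above is cleaner, and I would present it as the main argument.
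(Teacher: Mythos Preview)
Your argument is correct and is essentially the paper's own proof: both fix the uniform hider, bound the number of winning distributions by the number of depth-$d$ leaves of the reveal tree, and then collapse the count one level at a time using $\mathbb{E}[|Q|\mid\text{history}]=k$. The paper writes this as a nested sum that telescopes from the inside out, while you phrase the identical computation as a backward induction $U(v)\le k^{d-j}$; the content is the same.
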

\begin{proof} 
    We claim that the hider can achieve this game value using the same hiding strategy as before, choosing an allocation uniformly randomly.
    
    For an arbitrary search strategy, let $\mathcal{Q}_1$ denote the set of first queries she makes with positive probability, and for query $Q_1=\{h_1^1, h_1^2, \dots, h_1^{|Q_1|}\} \in \mathcal{Q}_1$, let $\mathcal{Q}_2^{(Q_1,{j_1})}$ denote the set of second queries she makes with positive probability if the hider revealed a treasure from box $j_1 \in Q_1$ in step 1.
    Similarly, for step $s$ of the game we define $\mathcal{Q}_s^{(Q_1,{j_1}), (Q_2,{j_2}), \dots, (Q_{s-1},{j_{s-1}})}$ to  be the set of positive probability queries of the searcher if the first $s-1$ queries of the game were $Q_1 \in \mathcal{Q}_1, Q_2\in \mathcal{Q}_2^{(Q_1,j_1) }, \dots, Q_{s-1} \in {\mathcal{Q}_{s-1}^{(Q_1,j_1), (Q_2,j_2), \dots, (Q_{s-2},j_{s-2})}}$, and the first $s-1$ reveals were from boxes $j_1, j_2, \dots j_{s-1}$.

    Using these notations, the expected number of configurations where the searcher wins is at most 
    
    \begin{multline*}
        \sum\limits_{Q_1 \in \mathcal{Q}_1} P(Q_1) \sum\limits_{j_1 \in Q_1} \dots \sum\limits_{Q_{d-1} \in \mathcal{Q}_{d-1}^{(Q_1,j_1), \dots, (Q_{d-2},j_{d-2})}} P\left({Q}_{d-1}\right) \sum\limits_{j_{d-1} \in Q_{d-1}}   \sum\limits_{Q_d \in \mathcal{Q}_d^{(Q_1,j_1), \dots, (Q_{d-1},j_{d-1})}} P(Q_d) \sum\limits_{j_d \in Q_d} 1 =  \\
        \sum\limits_{Q_1 \in \mathcal{Q}_1} P(Q_1) \sum\limits_{j_1 \in Q_1} \dots \sum\limits_{Q_{d-1} \in \mathcal{Q}_{d-1}^{(Q_1,j_1), \dots, (Q_{d-2},j_{d-2})}} P\left({Q}_{d-1}\right) \sum\limits_{j_{d-1} \in Q_{d-1}}   E(|Q_d|) =  \\
        \sum\limits_{Q_1 \in \mathcal{Q}_1} P(Q_1) \sum\limits_{j_1 \in Q_1} \dots \sum\limits_{Q_{d-1} \in \mathcal{Q}_{d-1}^{(Q_1,j_1), \dots, (Q_{d-2},j_{d-2})}} P\left({Q}_{d-1}\right) \sum\limits_{j_{d-1} \in Q_{d-1}}   k = \dots = k^d.
    \end{multline*}

\end{proof}

\begin{all}
If $n$ is large enough compared to $d$ and $k$, then the value of the game is $\frac{k^d}{\binom{n+d-1}{d}}$ for any positive real $k$.
\end{all}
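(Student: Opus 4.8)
The upper bound $\frac{k^d}{\binom{n+d-1}{d}}$ for real $k$ was established in the previous theorem, so the plan is to produce a matching searcher strategy once $n$ is large. Since the game has a value and the uniform hider already certifies that this value is at most $\frac{k^d}{\binom{n+d-1}{d}}$, it suffices to exhibit a single searcher strategy that wins with probability at least $\frac{k^d}{\binom{n+d-1}{d}}$ against every hiding configuration. Mirroring Theorems \ref{d=2} and \ref{d=3}, I would aim for an \emph{equalizing} strategy, i.e. one whose winning probability against each configuration is exactly $\frac{k^d}{\binom{n+d-1}{d}}$; then the worst case equals the bound. After randomly permuting the boxes (as the paper does throughout), the winning probability depends only on the multiplicity type of the configuration, so only the finitely many partition types of $d$ need to be matched.

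For the strategy itself I would generalize the sequential construction used in the $d=3$ case: keep a current box (the site of the last revealed treasure), and at each step query this box together with a set of previously unqueried boxes, never re-querying a box from which nothing was revealed. This gives the strategy the property of Observation \ref{Noaskback}, so the revealer's choice of which treasure to hand over is irrelevant and the analysis reduces to counting configurations. The one new ingredient for non-integer $k$ is to randomize the \emph{size} of each query between $\lfloor k\rfloor$ and $\lceil k\rceil$, with probabilities $\lceil k\rceil-k$ and $k-\lfloor k\rfloor$, so that each query has conditional expected size exactly $k$; this is precisely what forces equality in the chain of identities used for the upper bound, where each step contributed a factor $E(|Q_s|)=k$. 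As with the branch weight $\frac{nk^2}{\binom{n+2}{3}}$ in Theorem \ref{d=3}, I would also leave the branch probabilities as free parameters.

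The key step is to choose these free parameters so that the winning probability equals $\frac{k^d}{\binom{n+d-1}{d}}$ simultaneously for every multiplicity type. Here the large-$n$ hypothesis does the work: the all-distinct type carries the leading term and pins the product of expected query sizes to $k^d$ over the $\binom{n+d-1}{d}$ equally likely configurations, while the types with coincident treasures are fewer and of lower order in $n$, and can be matched by tuning the branch weights and the size-randomization. Large $n$ keeps every required probability inside $[0,1]$ and supplies enough fresh boxes that the winning configurations of different branches have disjoint supports, so nothing is double counted and the expected count of beaten configurations factorizes as $\prod_s E(|Q_s|)=k^d$. I expect the main obstacle to be exactly this simultaneous equalization for general $d$: showing that the resulting finite system (one equation per partition of $d$) is solvable with genuine probabilities for all large $n$, and carefully matching the coincident-treasure types, which is where largeness of $n$ is indispensable.
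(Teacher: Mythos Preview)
Your plan identifies the right ingredients (randomize query sizes between $\lfloor k\rfloor$ and $\lceil k\rceil$, never re-ask a box that was queried but gave nothing, randomly permute first), but the actual proof is missing. You set up ``one equation per partition of $d$'' with unspecified branch weights as free parameters and then say you \emph{expect the main obstacle} to be showing this system is solvable in $[0,1]$ for large $n$. That is the whole content of the theorem, and you have not done it: you have not counted the free parameters, not written the equations, and not argued solvability. The $d=2,3$ strategies in the paper were found case by case; there is no indication that this approach extends systematically to general $d$, and indeed the paper does not prove the theorem this way.

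The paper's route avoids the equalization system entirely. It takes P\'alv\"olgyi's optimal $k=1$ strategy as a black box: that strategy (via a uniformly random target allocation $\mu$ encoded by a Young diagram) already wins with probability exactly $\frac{1}{\binom{n+d-1}{d}}$. The only new idea is to replace each single-box query by a query of size $\lfloor k\rfloor$ or $\lceil k\rceil$ (with probabilities $p,1-p$ giving mean $k$), consisting of the ``intended'' box plus fresh boxes; the branch probabilities are not free but are dictated by the conditional probability $p_\lambda(n,d,1)$ coming from the $k=1$ strategy. A two-line computation then shows that at every step the probability of hitting the correct box is exactly $k$ times what it was for $k=1$, so the winning probability is $k^d\cdot\frac{1}{\binom{n+d-1}{d}}$. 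Large $n$ is used only to guarantee $p_\lambda(n,d,\lceil k\rceil)\le 1$ and that enough fresh boxes exist. This reduction to $k=1$ is the missing idea in your proposal.
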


\begin{proof}
We already have the upper bound from the previous theorem. 

For the lower bound, we use a search strategy similar to the one in the paper of Pálvölgyi. \cite[Proof of Theorem 2.]{Domotor}.
In each step, the searcher will ask $\lfloor k\rfloor$ boxes with probability $p$ and $\lceil k\rceil$ boxes with probability $1-p$ such that the expected value is $k$.
We define $p_\lambda (n,d,1)$ the same way P\'alvölgyi did in his paper. 

For this, we need to revisit the strategy of the searcher if she can only ask 1 box in each step. Then her strategy is the following:

She picks randomly a starting allocation $\mu$ of the treasures, uniformly distributed among all the $\binom{n+d-1}{d}$ possibilities. Then she will ask the box with the most treasures in $\mu$. She asks it as many times, as many treasures there are in that box in $\mu$.
Then she asks the box with the second most treasures and asks just enough times to make it empty assuming $\mu$. She repeats that until she asked all her questions. If several boxes have the same amount of treasures, she picks one of them with equal probability. This way, if we count the discovered treasures in the boxes she picked in order, we get a Young diagram $\lambda$ at any point of the game.
For the outside observer, who does not know $\mu$, the Young diagram $\lambda$ tells a probability of the searcher asking the same box again in the next question (provided that the observer knows the strategy, but does not know the $\mu$, he can calculate conditional probabilities of all possible Young diagram shapes). This probability with a given Young table $\lambda$ is called $p_\lambda (n,d,1)$.
For any positive integer $s$, we define $p_\lambda (n,d,s)=s\cdot p_\lambda (n,d,1)$.

We have to assume that $n$ is large enough, such that $p_\lambda (n,d,\lceil k\rceil)\leq 1$, and also such that $n\geq d\cdot \lceil k\rceil$.

Now we can define the strategy of the searcher. The queries are asked such that the discovered treasures form a Young diagram at any point throughout the game. The searcher will ask:
\begin{itemize}
\item The box she received the last treasure from, and $\lfloor k\rfloor-1$ brand new, previously untouched boxes with $p\cdot p_\lambda (n,d,\lfloor k\rfloor)$ probability.
\item $\lfloor k\rfloor$ brand new, previously untouched boxes with $p\cdot (1-p_\lambda (n,d,\lfloor k\rfloor))$ probability.
\item The box she received the last treasure from, and $\lceil k\rceil-1$ brand new, previously untouched boxes with $(1-p)\cdot p_\lambda (n,d,\lceil k\rceil)$ probability.
\item $\lceil k\rceil$ brand new, previously untouched boxes with $(1-p)\cdot (1-p_\lambda (n,d,\lceil k\rceil))$ probability.
\end{itemize}

We remark that if $k$ is an integer, then this is the same strategy as the one in Pálvölgyi's paper.

Now we need to show that this strategy wins with $\frac{k^d}{\binom{n+d-1}{d}}$ probability. If any question asks more than one box that contains treasures, the searcher has already lost. Therefore, it is enough to show that with this strategy, each step has $k$ times the probability of finding the box $i$ with a treasure than the corresponding step in the $k=1$ case.

Here we have 2 cases to consider: If box $i$ is the box she last received a treasure from, then if $k=1$, she had a probability $p_\lambda (n,d,1)$ to find it. In the current strategy, her chance to find the box is: \\
$p\cdot p_\lambda (n,d,\lfloor k\rfloor)+(1-p)\cdot p_\lambda (n,d,\lceil k\rceil)=p_\lambda (n,d,1)\cdot (p\lfloor k\rfloor +(1-p)\lceil k\rceil)=k\cdot p_\lambda (n,d,1)$ \\
which we wanted to show.

If box $i$ is a new box and $k=1$ she had probability $1-p_\lambda (n,d,1)$ to find it.
In the current strategy, her chance to find the box is: \\
$(\lfloor k\rfloor-1)\cdot p\cdot p_\lambda (n,d,\lfloor k\rfloor)+\lfloor k\rfloor \cdot p\cdot (1-p_\lambda (n,d,\lfloor k\rfloor))+(\lceil k\rceil-1)\cdot (1-p)\cdot p_\lambda (n,d,\lceil k\rceil)+\lceil k\rceil \cdot (1-p)\cdot (1-p_\lambda (n,d,\lceil k\rceil))=\\(\lfloor k\rfloor p+\lceil k\rceil (1-p))-(p\cdot p_\lambda (n,d,\lfloor k\rfloor)+(1-p)\cdot p_\lambda (n,d,\lceil k\rceil))=k\cdot (1-p_\lambda (n,d,1))$ \\
which we wanted to show. 

If $k$ was $1$, the searcher could win the game with probability $\frac{1}{\binom{n+d-1}{d}}$. Therefore since the game consists of $d$ steps, and the chance of discovering a new treasure in each step is $k$ times the chance of discovering it with $1$-sized queries, we get that the winning probability of the searcher is $\frac{k^d}{\binom{n+d-1}{d}}$.
\end{proof}

\subsection{Accumulation game} \label{mogyorokrem}

In this section, we demonstrate a version of the Multiple Caching Game where the amount of treasure can be an arbitrary positive real number. In this game, the hider has positive $d$ weight units of gold which he can distribute among $n$ locations. The searcher can query a set of $k$ locations and he loses if there is less than $1$ unit of gold in these locations, otherwise he receives one unit of gold from these.
Even for integer $d$, the hider may distribute the gold in a way that makes it impossible for the searcher to find all $d$ units, therefore we consider the one-turn version of the game, i.e. the searcher wins if the $k$ boxes of his first query contain at least $1$ unit of gold altogether.
\\
The goal of hider is to minimize the number of $k$-sets of locations with combined weight at least $1$.
\begin{obs}\label{sokgold}
    If $d \geq n$, then there will surely be a location with more than $1$ unit of gold, therefore it is optimal for the hider to place all gold in one location, so in this case the value of the game is $\frac{k}{n}$.
\end{obs}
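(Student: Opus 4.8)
The plan is to prove that the value equals $\frac{k}{n}$ by exhibiting matching strategies for the two players, treating the one-turn game as a zero-sum game whose value $v$ is $\min_{\text{hider}}\max_{\text{searcher}} P(\text{searcher wins})$. Here a pure hider strategy is a weight vector $w=(w_1,\dots,w_n)$ with $w_i\ge 0$ and $\sum_i w_i=d$, and a pure searcher strategy is a $k$-set $S\subseteq[n]$, which wins if and only if $\sum_{i\in S}w_i\ge 1$. I would then establish the two matching bounds and read off the optimality of concentrating the gold.

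For the upper bound $v\le\frac{k}{n}$, I would let the hider place all $d$ units in a single location chosen uniformly at random among the $n$ boxes. Since $d\ge n\ge 1$, that one box alone already holds at least $1$ unit, so a $k$-set $S$ wins exactly when it contains the loaded box; as the loaded box is uniform, every fixed $S$ wins with probability exactly $\frac{k}{n}$, and hence so does every randomized searcher strategy by averaging. Thus no searcher can exceed $\frac{k}{n}$ against this hider.

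For the lower bound $v\ge\frac{k}{n}$, I would have the searcher simply choose a uniformly random $k$-set and bound from below the number of winning sets for every fixed placement $w$. This is the crux: by pigeonhole the heaviest box has weight at least the average $\frac{d}{n}\ge 1$, so every $k$-set containing that box automatically wins, and there are exactly $\binom{n-1}{k-1}$ of these. Consequently every placement admits at least $\binom{n-1}{k-1}$ winning $k$-sets, so a uniform random query wins with probability at least $\binom{n-1}{k-1}/\binom{n}{k}=\frac{k}{n}$ against any hider; by independence and averaging the same bound holds against any mixed hider strategy.

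Combining the two bounds yields $v=\frac{k}{n}$. The same count shows that the all-in-one placement realizes the minimum possible number $\binom{n-1}{k-1}$ of winning $k$-sets, which is precisely the assertion that concentrating all gold in one location is optimal for the hider. The only point to be careful about is the wording ``more than $1$'': the pigeonhole step only produces a box of weight $\ge\frac{d}{n}\ge 1$ (with equality possible when $d=n$ and the gold is spread evenly), but ``$\ge 1$'' is all the argument needs, so there is no real obstacle beyond keeping the minimax sandwich tight.
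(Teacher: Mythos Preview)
Your proof is correct and follows essentially the same reasoning as the paper; the observation in the paper carries its argument in the statement itself (pigeonhole forces a box of weight at least $1$, so concentrating the gold is optimal and the value is $k/n$), and you have simply made this explicit as a clean minimax sandwich. Your remark that the phrase ``more than $1$'' should really be ``at least $1$'' in the edge case $d=n$ is also spot on.
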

\begin{obs}\label{kevesgold}
    If $d < \frac{n}{k}$, then the hider can distribute the gold evenly among the locations, giving the searcher $0$ probability of success.
\end{obs}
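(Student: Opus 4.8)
The plan is to exhibit the explicit even hiding strategy suggested in the statement and to verify directly that, under it, no query can ever reach the threshold weight $1$. First I would place exactly $\frac{d}{n}$ units of gold in each of the $n$ locations. This is a legitimate distribution, since the total weight used is $n \cdot \frac{d}{n} = d$, matching the hider's budget, and every location receives a nonnegative amount.

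Next I would compute the combined weight seen by the searcher. Since the searcher queries a set of exactly $k$ locations, and each location carries weight $\frac{d}{n}$, any such query has total weight $k \cdot \frac{d}{n} = \frac{kd}{n}$, independently of which boxes are chosen. The hypothesis $d < \frac{n}{k}$ is equivalent to $kd < n$, and hence to $\frac{kd}{n} < 1$. Therefore every possible $k$-set of locations has combined weight strictly less than $1$.

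Finally I would conclude the value statement. Because the searcher wins the one-turn game only when her query contains at least $1$ unit of gold, and no query attains this under the even distribution, she loses with certainty against this single fixed hider strategy, whatever deterministic or randomized choice of $k$ boxes she makes. Consequently the value of the game is $0$. There is essentially no obstacle here: the entire content reduces to the one-line inequality $\frac{kd}{n} < 1$, which is merely a rewriting of the assumed bound on $d$.
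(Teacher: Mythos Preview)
Your proof is correct and matches the paper's approach exactly: the paper states this as a bare observation without proof, since distributing the gold evenly and noting that any $k$ boxes then contain $\frac{kd}{n}<1$ units is immediate from the hypothesis.
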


An equivalent version of this game was previously considered by Ruckle \cite{Ruckle1, Ruckle2}, by Ruckle and Kikuta \cite{Ruckle3}, and by Alpern, Fokkink and Kikuta \cite{ujalpern} \footnote{In their setting, the hider wants to maximize the number of $k$-sets of locations which contain at least $1$ unit altogether. However, easy consideration shows that the validity of Ruckle's conjecture is equivalent to that in our case.}.
Ruckle stated the following conjecture regarding the optimal hiding placement of the hider:
\begin{sejt}[Ruckle's Conjecture] \cite{Ruckle2}
For all parameters, there exists an optimal hiding strategy of the following form: The hider chooses $r$ boxes and hides exactly $\frac dr$ units of gold in each of them. The rest of the boxes remain empty.
\end{sejt}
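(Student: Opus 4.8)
\subsection*{Proof proposal}

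The plan is to recast the one-turn accumulation game as a purely combinatorial minimization and then to attack it by smoothing the hider's weights toward the flat form. By the symmetry of the game under relabelling the $n$ locations, the searcher has an optimal strategy that queries a uniformly random $k$-set, so the value equals $N^\ast/\binom{n}{k}$, where $N^\ast$ is the minimum, over weight vectors $w=(w_1,\dots,w_n)$ with $w_i\ge 0$ and $\sum_i w_i=d$, of the number of \emph{heavy} $k$-sets, i.e. sets $S$ with $|S|=k$ and $\sum_{i\in S}w_i\ge 1$. Ruckle's conjecture then asserts that this minimum is attained by a vector supported on some $r$ coordinates, each equal to $d/r$. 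First I would record that a minimizer exists (a routine compactness argument on the simplex, relaxing the threshold to $w(S)>1-\varepsilon$ and letting $\varepsilon\to 0$ if needed), sort the weights as $w_1\ge\cdots\ge w_n\ge 0$, and dispose of the boundary regimes already visible in Observations \ref{sokgold} and \ref{kevesgold}: when $d\ge n$ the optimum is a single full box, and when $d<n/k$ the uniform vector kills every heavy set.

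The core of the argument would be an equalization step. Given an optimal $w$ with two positive, unequal coordinates $w_i>w_j$, I would transfer mass $\delta$ from $i$ to $j$ and pair each candidate heavy set $S\cup\{i\}$ with $S\cup\{j\}$, where $S$ ranges over $(k-1)$-subsets avoiding $i$ and $j$; sets containing both or neither of $i,j$ keep their weight and are irrelevant. Because the two sets in a pair have the fixed total $2\,w(S)+w_i+w_j$ and are driven toward their common mean by the transfer, the number of heavy sets in the pair is unchanged whenever both lie on the same side of the threshold $1$. The aim is to iterate such transfers until the support of $w$ is flat.

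The main obstacle is that this smoothing is \emph{not} globally monotone: in a pair where exactly one set is heavy and the common mean $w(S)+\tfrac{w_i+w_j}{2}$ already exceeds $1$, equalization turns the light set heavy and so \emph{increases} the count. Equivalently, after the substitution $a_i=w_i-\tfrac1k$ the objective becomes the number of $k$-subsets with $\sum_{i\in S}a_i\ge 0$ for a sequence of fixed sum $d-n/k$, and this threshold count is not Schur-concave; it is exactly the kind of nonnegative-sum subset count that lies at the heart of the Manickam--Mikl\'os--Singhi circle of problems. Thus a bare majorization argument cannot work, and one must instead show that any configuration in which an equalizing move would create heavy sets can be replaced, without raising $N^\ast$, by one with strictly more zero coordinates, so that the smoothing process still terminates at a flat vector.

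I would therefore not expect a short unconditional proof, and the realistic target is to settle the parameter regimes where the threshold counting can be controlled: small $k$, small optimal support $r$, or $n$ large relative to $d$ and $k$, where Manickam--Mikl\'os--Singhi-type extremal results pin down the flat minimizer, leaving the fully general statement conditional on the corresponding subset-sum extremal conjecture. The hardest point, and the one I would concentrate on, is precisely the failure of monotone smoothing above: quantifying the trade-off between heavy sets gained near a super-threshold mean and those lost elsewhere, which is where the combinatorics of the $(k-1)$-shadow of the support enters.
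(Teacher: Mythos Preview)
The statement you are attempting to prove is Ruckle's \emph{Conjecture}: the paper does not prove it, and indeed presents it explicitly as an open problem. The paper only verifies the special case $k\mid n$ (Theorem~\ref{bar}, via Baranyai's theorem and pigeonhole), gives a worked example for $(n,k)=(5,3)$, and observes the link to the Manickam--Mikl\'os--Singhi conjecture. There is therefore no ``paper's own proof'' to compare your proposal against.

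Your proposal is not a proof either, and to your credit you say so explicitly. The reduction to minimizing the number of heavy $k$-sets over the simplex is correct, as is the observation that the natural equalization/smoothing move is not monotone because the threshold count fails to be Schur-concave; this is precisely why the problem is hard and why it connects to MMS-type questions, a connection the paper also highlights. But the step where you write ``one must instead show that any configuration in which an equalizing move would create heavy sets can be replaced, without raising $N^\ast$, by one with strictly more zero coordinates'' is exactly the missing idea: you have restated what needs to be proved rather than supplied a mechanism for it. Without that, the proposal is a strategy outline that correctly locates the obstruction but does not overcome it, and so does not advance beyond what the paper already records as open.
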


They verified the conjecture in several special cases, for example when $k \leq 2$ or $n-k \leq 2$, and when $k\mid n$. We include a short alternate proof for the last special case.

\begin{all}\label{bar}
If $k\mid n$, the hider wins with probability at most $\frac{k}{n}$ for any  $d\geq \frac{n}{k}$. 
\end{all}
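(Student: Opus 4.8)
The plan is to show that the value of this one-shot game is exactly $\frac kn$ by reformulating it combinatorially and then proving a matching lower bound; the inequality in the statement is the easy half, while the hypotheses $k\mid n$ and $d\ge \frac nk$ are exactly what is needed for the matching half. As in Observations \ref{sokgold} and \ref{kevesgold}, I would first record that, by the symmetry of the locations, an optimal searcher queries a uniformly random $k$-set, so the value equals $\min_w N(w)/\binom nk$, where $w=(w_1,\dots,w_n)$ ranges over nonnegative allocations with $\sum_i w_i=d$ and $N(w)$ denotes the number of $k$-sets of combined weight at least $1$. In this language the claim becomes $\min_w N(w)=\binom{n-1}{k-1}$.

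The easy direction (the hider can hold the searcher to $\frac kn$) is handled by the allocation placing all $d$ units in a single box: since $d\ge \frac nk\ge 1$, that box already has weight $\ge 1$, so the winning $k$-sets are exactly the $\binom{n-1}{k-1}$ sets containing it, giving value at most $\frac kn$. Note this uses neither $k\mid n$ nor the full strength of $d\ge\frac nk$, which is why I expect the real content to lie elsewhere.

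The substantive direction — that no allocation produces fewer than $\binom{n-1}{k-1}$ winning sets — is where both hypotheses enter, and it is the step I expect to be the main obstacle. Here I would double count over partitions. Since $k\mid n$, set $m=\frac nk$ and let $\mathcal P$ be the family of partitions of the $n$ boxes into $m$ blocks of size $k$. For any allocation $w$ and any $P\in\mathcal P$, the $m$ block-weights sum to $d\ge m$, so by averaging at least one block has weight $\ge 1$; thus every partition contains at least one winning block. Now count pairs $(P,S)$ with $S$ a winning block of $P$: summing over $P$ gives at least $|\mathcal P|$, while summing over $S$ gives $\beta\,N(w)$, where $\beta$ is the number of partitions having a fixed $k$-set as a block (independent of the $k$-set, since any two $k$-sets are interchanged by a permutation preserving $\mathcal P$). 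Comparing with the incidence identity $|\mathcal P|\,m=\binom nk\,\beta$ then yields $N(w)\ge |\mathcal P|/\beta=\binom nk/m=\binom{n-1}{k-1}$.

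The only delicate points in carrying this out are checking that the averaging step genuinely requires $d\ge \frac nk=m$ (otherwise a partition could consist entirely of losing blocks, exactly the phenomenon of Observation \ref{kevesgold}), and that the two incidence counts are set up consistently so that the unknown quantities $|\mathcal P|$ and $\beta$ cancel and only their ratio $\binom nk/m$ survives. Combining the two directions gives $\min_w N(w)=\binom{n-1}{k-1}$, so the value is $\frac kn$, which in particular establishes the stated bound.
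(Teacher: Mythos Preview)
Your argument is correct, and it actually improves on the paper's proof in one notable respect. The paper invokes Baranyai's theorem to decompose $\binom{[n]}{k}$ into perfect matchings of $[n]$, then applies the pigeonhole argument once in each matching; this gives directly that at least a $\frac{k}{n}$ fraction of all $k$-sets are winning. Your route uses the same pigeonhole observation (every partition of $[n]$ into $k$-blocks contains a winning block) but, instead of producing an explicit Baranyai decomposition, averages over \emph{all} partitions and lets the symmetry of the $S_n$-action on $k$-sets do the counting. The two are morally the same argument, but yours is strictly more elementary: Baranyai's theorem is a nontrivial result, whereas the transitivity fact that $\beta$ is independent of the chosen $k$-set is immediate.

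One small wording remark: your parenthetical ``any two $k$-sets are interchanged by a permutation preserving $\mathcal P$'' is not quite what you use --- you only need that $S_n$ acts transitively on $k$-sets and acts on the set of partitions, so that the number of partitions containing a given $k$-set is constant. Also, note that the inequality actually asserted in the theorem (and proved in the paper) is the \emph{substantive} direction --- that the value is at least $\frac{k}{n}$ --- not the easy upper bound you label as ``the statement''; but since you prove both directions this does not affect the correctness of your write-up.
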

\begin{proof}[Proof due to Andr\'as Imolay, personal communication.]
Using the Baranyai Theorem \cite{sajt}, we can partition the set of all $k$-element subsets of $[n]$ in a way such that each element of the partition contains $\frac{n}{k}$ subsets, whose disjoint union is precisely $[n]$. Since altogether there are at least $\frac{n}{k}$ units of gold in the boxes, in each element of the partition at least one $k$-element set must contain at least $1$ unit of gold by the pigeonhole principle. Therefore, at least  $\frac{k}{n}$ fraction of all $k$-element sets have at least $1$ unit of gold, so the hider wins with a probability at most $1-\frac{k}{n}$.
The lower bound comes from simply hiding all of the gold in the same box.
\end{proof}

Theorem \ref{bar} implies that in the case $k \mid n$, it is optimal for the hider to either hide all the gold in one location or distribute it evenly among them, verifying Ruckle's conjecture for this case. 
For other values of  $k$ and $n$, there may exist values for $d$ for which the optimal hiding strategy is non-trivial:

\begin{pl}
If $n=5$, $k=3$ and $\frac53 \leq d < 3$, the best hiding strategy gives a chance of $\frac{7}{10}$ for the hider to win.
\end{pl}
\begin{proof}
If the hider places $\frac{d}{3}$ units in $3$ boxes, and leaves the others empty, the searcher can only win if she guesses $2$ of these $3$ boxes. She has $\frac{3}{10}$ chance to do so. So the hider wins with probability $\frac{7}{10}$ in this case. Note that this distribution still satisfies the property described in Ruckle's conjecture.

Now we show that there isn't a distribution of gold for which at least 8 box triples contain less than 1 unit combined.
Suppose there is such a hiding allocation. Let's say that the boxes contain $a_1 \geq a_2 \geq a_3 \geq a_4 \geq a_5$ weight units of gold respectively. Since at most $2$ of the triple-wise sums add up to at least $1$ unit, these sums must be among  $a_1+a_2+a_3$ and $a_1+a_2+a_4$.
Hence, the other sums must be less than $1$, which means $a_1+a_3+a_4, a_2+a_3+a_4, a_3+a_4+a_5, a_1+a_2+a_5$ are all less than $1$. We can create a linear combination of them to get: $3(a_1+a_2+a_3+a_4+a_5) < 5$. Consequently, $d=a_1+a_2+a_3+a_4+a_5 < \frac{5}{3}$, which is a contradiction.
\end{proof}

Ruckle's conjecture is quite similar to the Manickam-Mikl\'os-Singhi conjecture \cite{Manickam}. This similarity was previously discovered by Csóka \cite{Csoka}, we point out an explicit connection between them.

\begin{kerd}
Let $n,k$ be fixed integers. Let $\{i_1, i_2, \dots, i_k \} \subset \{ 1,2, \dots, n\}$ be a $k$-element subset, chosen uniformly. Try to find $a_1,a_2, \dots, a_n$ real numbers, such that \begin{equation}\label{MMSeq}
    P\left( a_{i_1} + a_{i_2} + \dots + a_{i_k} < \frac{k}{n} \sum\limits_{i=1}^n a_i \right) 
\end{equation} is maximized. 
\end{kerd}
\begin{sejt}[Manickam–Mikl\'os–Singhi]
If $4k \leq n$, then the maximum probability is $1 - \frac{k}{n}$.
\end{sejt}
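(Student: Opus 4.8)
The plan is to first normalize the problem and then treat it as a purely combinatorial counting statement. Writing $S = \sum_{i=1}^n a_i$ and replacing each $a_i$ by $b_i = a_i - S/n$, we get $\sum_i b_i = 0$, and the event $\sum_j a_{i_j} < \frac{k}{n}S$ in (\ref{MMSeq}) holds exactly when $\sum_j b_{i_j} < 0$. So without loss of generality I would assume $\sum_i a_i = 0$, in which case (\ref{MMSeq}) is simply the probability that a uniformly random $k$-subset has negative sum. Maximizing this is equivalent to \emph{minimizing} the number $N$ of $k$-subsets with non-negative sum, and the claim becomes the statement that every zero-sum sequence of length $n$ with $n \geq 4k$ satisfies $N \geq \binom{n-1}{k-1}$, so that the probability is at most $1 - \binom{n-1}{k-1}/\binom{n}{k} = 1 - \frac{k}{n}$.

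That this bound is attained, and hence is the maximum, is witnessed by the configuration $a_1 = n-1$ and $a_2 = \dots = a_n = -1$: a $k$-subset has sum $n-k \geq 0$ when it contains index $1$ and sum $-k < 0$ otherwise, so exactly $\binom{n-1}{k-1}$ subsets are non-negative and the negative-sum probability is $\binom{n-1}{k}/\binom{n}{k} = 1 - \frac{k}{n}$. The entire difficulty therefore lies in the lower bound $N \geq \binom{n-1}{k-1}$. To prove it I would order $a_1 \geq a_2 \geq \dots \geq a_n$ and begin with an averaging argument over random structures: the cleanest first attempt averages the count of non-negative $k$-subsets over random cyclic orderings, looking at the $n$ windows of $k$ consecutive elements. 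Since each element lies in exactly $k$ windows and $\sum_i a_i = 0$, the window-sums total to $0$, forcing non-negative windows to appear and giving a lower bound on $N$. This kind of averaging comfortably handles the regime $n \gg k$ and already shows the probability cannot exceed $1 - \frac{k}{n}$ once $n$ is large in terms of $k$.

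The main obstacle is reaching the \emph{sharp} threshold $n \geq 4k$. Naive averaging loses constant, indeed polynomial, factors, and near $n = 4k$ the single-heavy-element extremizer above competes with balanced configurations that split the mass into several equal heavy groups; this competition is precisely what makes the constant $4$ appear. I would therefore split into cases according to how large $a_1$ is relative to the remaining mass: when one coordinate dominates, a direct injection sending each non-negative $k$-subset into the $\binom{n-1}{k-1}$ subsets containing index $1$ should close the argument, while the balanced regime requires a separate and more delicate count. Matching the threshold exactly, that is, ruling out every competing configuration down to $n = 4k$ rather than merely for $n$ polynomially larger than $k$, is the crux and the step I expect to be genuinely hard, consistent with this being a long-standing conjecture rather than a routine exercise.
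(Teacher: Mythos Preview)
The statement you are attempting to prove is presented in the paper as a \emph{conjecture}, not a theorem; the paper gives no proof of it and does not claim one. It is the Manickam--Mikl\'os--Singhi conjecture, a well-known open problem. The paper merely states it in order to point out that it would imply Ruckle's conjecture in the range $4k\le n$.

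Your proposal is accordingly not a proof but an outline, and you say so yourself: after the (correct) normalization to $\sum a_i=0$, the (correct) identification of the extremal configuration $a_1=n-1$, $a_2=\dots=a_n=-1$, and a sketch of a cyclic-window averaging argument, you write that matching the threshold $n\ge 4k$ ``is the crux and the step I expect to be genuinely hard, consistent with this being a long-standing conjecture.'' That is an accurate assessment. The averaging and case-splitting ideas you describe are the starting points of the known partial results, which establish the conclusion only for $n$ at least some polynomial, or at best a large linear multiple, of $k$; none reaches $4k$. So there is no gap to point to in the sense of a wrong step---the gap is simply that the proof is not there, and neither is it in the paper.
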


Let us denote the weight of the gold in each corresponding box as $a_1,a_2,\dots ,a_n$, and their sum as $d$. If $\frac{k}{n} d =1$, then the MMS conjecture implies that the hider cannot win with probability greater than $1-\frac{k}{n}$. Combining this with Observation \ref{kevesgold} shows that the MMS conjecture implies Ruckle's conjecture in the case $4k \leq n$, as there is an optimal trivial allocation of gold for every $d$.

\section*{Acknowledgement}
The research was carried out in the REU 2021 programme at E\"otv\"os Lor\'and University in Budapest. We are tremendously grateful to Dömötör P\'alvölgyi and D\'aniel Lenger for supervising us and organising the programme. 

\newpage

\appendix

\section{Hiding strategies in the (3,3,2)-game}\label{Appendix}

In the next table, we present the allocation strategies of the hider. This was also found by a numerical minimizer program, which was running on the long expression describing the winning probability of the searcher based on the hider's parameters. \\
$l_1$: The probability of the hider hiding all treasures in the same box.\\
$l_2$: The probability of the hider hiding each of the treasures in separate boxes.\\
$l_3$: The probability of the hider hiding $2$ treasures in the same box, and the third one in a different box.\\
$q_1$: If the first box contains $2$ treasures, the third one contains $1$, then if in the second question, the searcher asks boxes $1$ and $3$, the hider will give the treasure from box $1$ with $q_1$ probability.\\
$q_2$: If the first box contains $1$ treasure, and the second one contains $2$ treasures, the hider will give the treasure from the first box with $q_2$ probability. \\
$q_3$: If the first box contains 2 treasures, the second contains 1 treasure, and in the first step the searcher received a treasure from the first box, and she asks boxes $1$ and $2$ again, the hider will give her a treasure from box $1$ with $q_3$ probability.\\
$q_4$: If all $3$ boxes contain $1$ treasure, the searcher received a treasure from the first box in the first step, and in the second question she asks boxes $2$ and $3$, the hider will give her the treasure from box $2$ with $q_4$ probability.\\

$
\begin{tabular}{|c|c|c|}
    \hline
      & $\text{Random revealer hiding strategy}$& $\text{Adversary revealer hiding strategy}$  \\ \hline
     $\text{Value of game}$& $12/19$ & $6/10$ \\ \hline
     $ l_1 $ & $5/19$ & $3/10$ \\ \hline
     $ l_2 $ & $2/19$ & $1/10$ \\ \hline
     $ l_3 $ & $12/19$ & $6/10$ \\ \hline
     $ q_1 $ & $1/2 \text{ (fixed)}$ & $1/2$ \\ \hline
     $ q_2 $ & $1/3 \text{ (fixed)}$ & $1/2$ \\ \hline
     $ q_3 $ & $1/2 \text{ (fixed)}$ & $\textcolor{black}{arbitrary}$ \\ \hline 
     $ q_4 $ & $1/2 \text{ (fixed)}$ & $\textcolor{black}{arbitrary}$ \\ \hline
\end{tabular}
$
\vspace{5 mm}

In the following, we demonstrate our method for upper bounds by proving that this strategy of the hider in the random case indeed guarantees a game value of $\frac{12}{19}$.

\subsection{Random revealer}

To calculate the winning probability of the hiding strategy, we need to find the strongest searcher strategy against it. We can assume that the searcher will play according to that. Now notice that whenever the searcher has a choice of what to ask, her best play is asking the one which entails the largest probability of winning out of the options. Hence the strongest search strategy is a definite one, so the treasure discoveries determine exactly what the next query is at each step. Call the value of this hiding strategy $v$. Let $v_1$ be the value of the game if after the first question and receiving a treasure from box $1$, the searcher asks $(1,1,0)$ and then plays optimally, $v_2$ with asking $(1,0,1)$ as a second query, and $v_3$ with asking $(0,1,1)$ as a second query. Then $v=max(v_1,v_2,v_3)$. We can split these subcases further and find the one question that gives the largest probability of winning for the searcher in each subcase.
First question is $(1,1,0)$ with discovered treasure from box $1$.
\begin{itemize}
    \item Second question is $(1,1,0)$, treasure from box $1$ \begin{itemize}
        \item Third question is $(1,1,0)$, probability of winning via that query path: $\frac{5}{19}\frac23=\frac{10}{57}$ from the 3-0-0 starting allocation, $\frac23\frac12\frac4{19}=\frac4{57}$ from the 2-1-0 starting allocation (The $\frac23\frac12$ factor comes from the random revealer's choice in the second step.) This sums to $\frac{14}{57}$.
        \item \textbf{THE STRONGEST} Third question is $(1,0,1)$, probability of winning via that query path: $\frac{5}{19}\frac23=\frac{10}{57}$ from the 3-0-0 starting allocation, $\frac{4}{19}$ from the 2-0-1 starting allocation. This sums to $\frac{22}{57}$.
        \item Third question is $(0,1,1)$, probability of winning via that query path: $\frac23\frac12\frac4{19}=\frac4{57}$ from the 2-1-0 starting allocation, $\frac4{19}$ from the 2-0-1 starting allocation. This sums to $\frac{16}{57}$.
    \end{itemize}
    \item Second question is $(1,1,0)$, treasure from box $2$
    \begin{itemize}
        \item Third question is $(1,1,0)$, probability of winning via that query path: $(\frac{1}{3}+\frac{2}{3}\frac12)\frac4{19}=\frac{8}{57}$ from the 2-1-0 starting allocation. This sums to $\frac8{57}$
        \item \textbf{THE STRONGEST} Third question is $(1,0,1)$, probability of winning via that query path: $\frac23\frac12\frac{4}{19}=\frac{4}{57}$ from the 2-1-0 starting allocation. $\frac{2}{19}$ from the 1-1-1 starting allocation. This sums to $\frac{10}{57}$.
        \item Third question is $(0,1,1)$, probability of winning via that query path: $\frac13\frac{4}{19}=\frac{4}{57}$ from the 1-2-0 starting allocation. $\frac{2}{19}$ from the 1-1-1 starting allocation. This sums to $\frac{10}{57}$.
        \end{itemize}
\item Second question is $(1,0,1)$, treasure from box $1$ \begin{itemize}
        \item \textbf{THE STRONGEST} Third question is $(1,1,0)$, probability of winning via that query path: $\frac{5}{19}\frac23=\frac{10}{57}$ from the 3-0-0 starting allocation, $\frac23\frac4{19}=\frac{8}{57}$ from the 2-1-0 starting allocation. This sums to $\frac{6}{19}$.
        \item  Third question is $(1,0,1)$, probability of winning via that query path: $\frac{5}{19}\frac23=\frac{10}{57}$ from the 3-0-0 starting allocation, $\frac12\frac{4}{19}=\frac{2}{19}$ from the 2-0-1 starting allocation. This sums to $\frac{16}{57}$.
        \item Third question is $(0,1,1)$, probability of winning via that query path: $\frac23\frac4{19}=\frac8{57}$ from the 2-1-0 starting allocation, $\frac12\frac4{19}=\frac2{19}$ from the 2-0-1 starting allocation. This sums to $\frac{14}{57}$.
    \end{itemize}
    \item Second question is $(1,0,1)$, treasure from box $3$ \begin{itemize}
        \item  Third question is $(1,1,0)$, probability of winning via that query path: $\frac12\frac{4}{19}=\frac{2}{19}$ from the 2-0-1 starting allocation, $\frac2{19}$ from the 1-1-1 starting allocation. This sums to $\frac{4}{19}$.
        \item \textbf{THE STRONGEST} Third question is $(1,0,1)$, probability of winning via that query path: $\frac12\frac{4}{19}=\frac{2}{19}$ from the 2-0-1 starting allocation, $\frac{4}{19}$ from the 1-0-2 starting allocation. This sums to $\frac{6}{19}$.
        \item Third question is $(0,1,1)$, probability of winning via that query path: $\frac4{19}$ from the 1-0-2 starting allocation, $\frac2{19}$ from the 1-1-1 starting allocation. This sums to $\frac{6}{19}$.
    \end{itemize}
        \item Second question is $(0,1,1)$, treasure from box $2$ \begin{itemize}
        \item \textbf{THE STRONGEST} Third question is $(1,1,0)$, probability of winning via that query path: $\frac23\frac{4}{19}=\frac{8}{57}$ from the 2-1-0 starting allocation, $\frac13\frac4{19}=\frac{4}{57}$ from the 1-2-0 starting allocation. This sums to $\frac{4}{19}$.
        \item  Third question is $(1,0,1)$, probability of winning via that query path: $\frac23\frac{4}{19}=\frac{2}{19}$ from the 2-1-0 starting allocation, $\frac12\frac{2}{19}=\frac1{19}$ from the 1-1-1 starting allocation. This sums to $\frac{3}{19}$.
        \item Third question is $(0,1,1)$, probability of winning via that query path: $\frac13\frac4{19}=\frac{4}{57}$ from the 1-2-0 starting allocation, $\frac2{19}$ from the 1-1-1 starting allocation. This sums to $\frac{10}{57}$.
        \end{itemize}
            \item Second question is $(0,1,1)$, treasure from box $3$ \begin{itemize}
        \item  Third question is $(1,1,0)$, probability of winning via that query path: $\frac{4}{19}$ from the 2-0-1 starting allocation, $\frac2{19}$ from the 1-1-1 starting allocation. This sums to $\frac{6}{19}$.
        \item \textbf{THE STRONGEST} Third question is $(1,0,1)$, probability of winning via that query path: $\frac{4}{19}$ from the 2-0-1 starting allocation, $\frac{4}{19}$ from the 1-0-2 starting allocation. This sums to $\frac{8}{19}$.
        \item Third question is $(0,1,1)$, probability of winning via that query path: $\frac4{19}$ from the 1-0-2 starting allocation, $\frac2{19}$ from the 1-1-1 starting allocation. This sums to $\frac{6}{19}$.
        \end{itemize}
\end{itemize}

$v_1$ is the sum of the strongest options from the first two cases, therefore $v_1=\frac{22}{57}+\frac{10}{57}=\frac{32}{57}$.
$v_2$ is the sum of the strongest options from the third and fourth cases, therefore $v_2=\frac6{19}+\frac{6}{19}=\frac{12}{19}$.
$v_3$ is the sum of the strongest options from the fifth and sixth cases, therefore $v_3=\frac8{19}+\frac{4}{19}=\frac{12}{19}$.

Since the value $v$ is the maximum of these different strategy values, we can say $v=\frac{12}{19}$ against this random revealer hiding strategy.
\subsection{Adversary revealer}

In a similar manner, we can show that the adversary hiding strategy shows the upper bound of $\frac{3}{5}$ for the adversary revealer game. Again, one of the best searcher strategies against this type of hiding is a definite one by the same argument as in the random case. So we conclude by a case analysis:

\begin{itemize}
    \item Second question is $(1,1,0)$, treasure from box $1$ \begin{itemize}
        \item Third question is $(1,1,0)$, probability of winning via that query path: $\frac{3}{10}\frac23=\frac{2}{10}$ from the 3-0-0 starting allocation, $(1-q_2)q_3\frac2{10}$ from the 2-1-0 starting allocation (The $(1-q_2)q_3$ factor comes from the  revealer's choice in the first and second step.) This sums to $\frac{2+q_3}{10}$.
        \item \textbf{THE STRONGEST} Third question is $(1,0,1)$, probability of winning via that query path: $\frac{3}{10}\frac23=\frac{2}{10}$ from the 3-0-0 starting allocation, $\frac{2}{10}$ from the 2-0-1 starting allocation. This sums to $\frac{4}{10}$.
        \item Third question is $(0,1,1)$, probability of winning via that query path: $(1-q_2)q_3\frac2{10}=\frac{q_3}{10}$ from the 2-1-0 starting allocation, $\frac2{10}$ from the 2-0-1 starting allocation. This sums to $\frac{2+q_3}{10}$.
    \end{itemize}
    \item Second question is $(1,1,0)$, treasure from box $2$
    \begin{itemize}
        \item Third question is $(1,1,0)$, probability of winning via that query path: $(1-q_2)(1-q_3)\frac{2}{10}$ from the 2-1-0 starting allocation. $q_2\frac{2}{10}$ from the 1-2-0 starting allocation. This sums to $\frac{2-q_3}{10}$
        \item Third question is $(1,0,1)$, probability of winning via that query path: $(1-q_2)(1-q_3)\frac{2}{10}$ from the 2-1-0 starting allocation. $\frac{1}{10}$ from the 1-1-1 starting allocation. This sums to $\frac{2-q_3}{10}$.
        \item \textbf{THE STRONGEST} Third question is $(0,1,1)$, probability of winning via that query path: $q_2\frac{2}{10}$ from the 1-2-0 starting allocation. $\frac{1}{10}$ from the 1-1-1 starting allocation. This sums to $\frac{2}{10}$.
        \end{itemize}
\item Second question is $(1,0,1)$, treasure from box $1$ \begin{itemize}
        \item \textbf{THE STRONGEST} Third question is $(1,1,0)$, probability of winning via that query path: $\frac{3}{10}\frac23=\frac{2}{10}$ from the 3-0-0 starting allocation, $(1-q_2)\frac2{10}$ from the 2-1-0 starting allocation. This sums to $\frac{3}{10}$.
        \item  Third question is $(1,0,1)$, probability of winning via that query path: $\frac{3}{10}\frac23=\frac{2}{10}$ from the 3-0-0 starting allocation, $q_1\frac{2}{10}$ from the 2-0-1 starting allocation. This sums to $\frac{3}{10}$.
        \item Third question is $(0,1,1)$, probability of winning via that query path: $(1-q_2)\frac2{10}$ from the 2-1-0 starting allocation, $q_1\frac2{10}$ from the 2-0-1 starting allocation. This sums to $\frac{2}{10}$.
    \end{itemize}
    \item Second question is $(1,0,1)$, treasure from box $3$ \begin{itemize}
        \item  Third question is $(1,1,0)$, probability of winning via that query path: $(1-q_1)\frac{2}{10}$ from the 2-0-1 starting allocation, $\frac1{10}$ from the 1-1-1 starting allocation. This sums to $\frac{2}{10}$.
        \item \textbf{THE STRONGEST} Third question is $(1,0,1)$, probability of winning via that query path: $(1-q_1)\frac{2}{10}$ from the 2-0-1 starting allocation, $\frac{2}{10}$ from the 1-0-2 starting allocation. This sums to $\frac{3}{10}$.
        \item Third question is $(0,1,1)$, probability of winning via that query path: $\frac2{10}$ from the 1-0-2 starting allocation, $\frac1{10}$ from the 1-1-1 starting allocation. This sums to $\frac{3}{10}$.
    \end{itemize}
        \item Second question is $(0,1,1)$, treasure from box $2$ \begin{itemize}
        \item Third question is $(1,1,0)$, probability of winning via that query path: $(1-q_2)\frac{2}{10}$ from the 2-1-0 starting allocation, $q_2\frac2{10}$ from the 1-2-0 starting allocation. This sums to $\frac{2}{10}$.
        \item \textbf{THE STRONGEST} Third question is $(1,0,1)$, probability of winning via that query path: $(1-q_2)\frac{2}{10}$ from the 2-1-0 starting allocation, $q_4\frac{1}{10}$ from the 1-1-1 starting allocation. This sums to $\frac{1+q_4}{10}$.
        \item Third question is $(0,1,1)$, probability of winning via that query path: $q_2\frac2{10}$ from the 1-2-0 starting allocation, $q_4\frac1{10}$ from the 1-1-1 starting allocation. This sums to $\frac{1+q_4}{10}$.
        \end{itemize}
            \item Second question is $(0,1,1)$, treasure from box $3$ \begin{itemize}
        \item  Third question is $(1,1,0)$, probability of winning via that query path: $\frac{2}{10}$ from the 2-0-1 starting allocation, $(1-q_4)\frac1{10}$ from the 1-1-1 starting allocation. This sums to $\frac{3-q_4}{10}$.
        \item \textbf{THE STRONGEST} Third question is $(1,0,1)$, probability of winning via that query path: $\frac{2}{10}$ from the 2-0-1 starting allocation, $\frac{2}{10}$ from the 1-0-2 starting allocation. This sums to $\frac{4}{10}$.
        \item Third question is $(0,1,1)$, probability of winning via that query path: $\frac2{10}$ from the 1-0-2 starting allocation, $(1-q_4)\frac1{10}$ from the 1-1-1 starting allocation. This sums to $\frac{3-q_4}{10}$.
        \end{itemize}
\end{itemize}

$v_1$ is the sum of the strongest options from the first two cases, therefore $v_1=\frac{4}{10}+\frac{2}{10}=\frac{3}{5}$.
$v_2$ is the sum of the strongest options from the third and fourth cases, therefore $v_2=\frac3{10}+\frac{3}{10}=\frac{3}{5}$.
$v_3$ is the sum of the strongest options from the fifth and sixth cases, therefore $v_3=\frac{1+q_4}{10}+\frac{4}{10}=\frac{5+q_4}{10}$.

The value $v$ is the maximum of the above three values, hence it is $v=\frac{3}{5}$ for this adversary hiding strategy, matching the lower bound from the search strategy. Note that the hider can pick $q_3$ and $q_4$ to be an arbitrary number between $0$ and $1$.


\begin{thebibliography}{1000}

\bibitem{eredeti} 
S. Alpern, R. Fokkink, T. Lidbetter, and N. S. Clayton, 
A search game model of the scatter hoarder’s problem, 
\textit{Journal of the Royal Society Interface} 9(70):869–879, 2012.

\bibitem{Alpern} 
S. Alpern, R. Fokkink, G. Leszek, R. Lindelauf, and V. S. Subrahmanian, 
Search Theory: A Game Theoretic Perspective, 
\textit{Springer}, 2013.

\bibitem{ujalpern} 
S. Alpern, R. Fokkink, and K. Kikuta, 
On Ruckle’s conjecture on accumulation games, 
\textit{SIAM Journal on Control and Optimization} 48(8):5073–5083, 2010.

\bibitem{Csoka} 
E. Csóka, 
Limit theory of discrete mathematics problems, 
{arXiv:1505.06984}, 2015.

\bibitem{Csoka2}  
E. Csóka and T. Lidbetter, 
The solution to an open problem for a caching game, 
\textit{Naval Research Logistics} 63(1):23–31, 2016.

\bibitem{Ruckle3} 
K. Kikuta and W. Ruckle, 
Accumulation games, Part 1: noisy search, 
\textit{Journal of Optimization Theory and Applications} 94(2):395–408, 1997.

\bibitem{Domotor} 
D. Pálvölgyi, 
All or nothing caching games with bounded queries, 
\textit{International Game Theory Review} 20(1):1–9, 2018.

\bibitem{Ruckle1} 
W. Ruckle, 
{Geometric Games and Their Applications}, 
\textit{Pitman Research Notes in Mathematics}, Vol. 82, Boston, 1983.

\bibitem{Ruckle2} 
W. Ruckle, 
Accumulation games, 
\textit{Scientia Mathematica Japonicae} 54(1):173–203, 2001.

\bibitem{Manickam} 
N. Manickam and N. M. Singhi, 
First distribution invariants and EKR theorems, 
\textit{Journal of Combinatorial Theory, Series A} 48(1):91–103, 1988. 

\bibitem{sajt} 
Zs. Baranyai, 
On the factorization of the complete uniform hypergraph, 
 \textit{Infinite and Finite Sets}, 
Colloquia Mathematica Societatis János Bolyai, Vol. 10, North-Holland, 91–107, 1973.

\end{thebibliography}
\end{document}